\documentclass[11pt]{amsart}
\usepackage[T1]{fontenc}
\usepackage[utf8]{inputenc}
\usepackage{lmodern}
\usepackage{microtype}
\usepackage{amsmath,amssymb,amsthm,mathtools}
\usepackage[a4paper,margin=31mm]{geometry}
\usepackage[colorlinks=true,linkcolor=blue,citecolor=blue,urlcolor=blue]{hyperref}
\usepackage{enumitem}

\numberwithin{equation}{section}

\newtheorem{theorem}{Theorem}[section]
\newtheorem{proposition}[theorem]{Proposition}
\newtheorem{lemma}[theorem]{Lemma}
\newtheorem{corollary}[theorem]{Corollary}
\newtheorem{remark}[theorem]{Remark}
\newcommand{\R}{\mathbb{R}}
\newcommand{\Q}{\mathbb{Q}}
\newcommand{\N}{\mathbb{N}}
\newcommand{\Bri}{\mathcal{B}}
\newcommand{\NB}{\mathcal{N}}
\newcommand{\cH}{\mathcal{H}}
\newcommand{\cP}{\mathcal{P}}
\newcommand{\LogCap}{\operatorname{Cap}}

\title[Logarithmic exponents of the non-Brjuno set]
{Exact logarithmic Hausdorff and capacity exponents of the non-Brjuno set}
\author[S.~Bazarbaev]{Sardor Bazarbaev}
\address{National University of Uzbekistan named after Mirzo Ulugbek,
4 University Street, 100174 Tashkent, Uzbekistan}
\email{uzedu.bazarbaev@gmail.com}

\author[K.~Rakhimov]{Karim Rakhimov}
\address{V.~I.~Romanovskiy Institute of Mathematics,
Academy of Sciences of the Republic of Uzbekistan,
9 Universitet Street, 100174 Tashkent, Uzbekistan}
\email{karimjon1705@gmail.com}

\date{}

\subjclass[2020]{11J70, 28A78, 31A15, 37F10}
\keywords{Brjuno numbers, logarithmic capacity, Hausdorff gauge,
continued fractions, mass transference principle}

\begin{document}

\begin{abstract}
Let $\mathcal N$ be the set of non-Brjuno real numbers. We determine the
exact critical exponent of $\mathcal N$ for both logarithmic Hausdorff
measure and logarithmic capacity. For the gauges
$h_\delta(r)=(\log(1/r))^{-\delta}$, the critical exponent is $2$:
the $h_\delta$-Hausdorff measure is infinite locally for $0<\delta\leq2$
and vanishes globally for $\delta>2$, while the logarithmic capacity is
positive exactly for orders $0<s\leq2$. In particular,
$\dim_{\mathrm{cap}}\mathcal N=2$.
\end{abstract}
\maketitle

\section{Introduction}

Let
$$
 f(z)=e^{2\pi i\alpha}z+a_2z^2+a_3z^3+\cdots
$$
be a holomorphic germ at the origin, where $\alpha\in\R\setminus\Q$.
The arithmetic condition introduced by Brjuno gives a sufficient condition
for analytic linearization, and Yoccoz proved its sharpness for the quadratic
family; see \cite{B71,Y95}.  If $P_n/Q_n$ are the continued-fraction
convergents of $\alpha$, then $\alpha$ is a Brjuno number precisely when
\begin{equation}\label{eq:Brjuno-intro}
 \sum_{n=0}^{\infty}\frac{\log Q_{n+1}}{Q_n}<\infty.
\end{equation}
We denote the set of Brjuno irrational numbers by $\Bri$, and write
$ \NB:=\R\setminus\Bri$.

For $\delta>0$, define the dimension function
\begin{equation}\label{eq:h-delta}
 h_\delta(0):=0,
 \qquad
 h_\delta(r):=
 \begin{cases}
  \displaystyle\left(\log\frac{1}{r}\right)^{-\delta},
      &0<r<\dfrac1e,\\[6pt]
  1,  &r\geq\dfrac1e.
 \end{cases}
\end{equation}

For $s>0$ and $z,w\in \mathbb C$ define the following kernel
$$ k_s(z,w):=(h_1(|z-w|))^{-s}.$$
Let $K\subset\mathbb C$ be  compact. Let us denote by $\cP(K)$  the set of Borel probability measures supported on
$K$. For $\mu\in\cP(K)$, its
\textit{$s$-energy} is defined by
$$
I_s(\mu):=\iint k_s(z,w)\,d\mu(z)\,d\mu(w).
$$
The logarithmic capacity of order $s$ of $K$ is then defined by
$$
 \LogCap_s(K)
 :=\left(\inf_{\mu\in\cP(K)}
          I_s(\mu)\right)^{-1}.
$$ For arbitrary $E\subset\mathbb C$ set
$$
 \LogCap_s(E):=
 \sup\{\LogCap_s(K):K\subset E,\ K\text{ compact}\}.
$$
$\LogCap_s(E)$ is called logarithmic capacity of $E$ with respect to the kernel $k_s(z,w)$. Since $k_t\geq k_s$ for $t>s$, we have
$$
\LogCap_t(E)\leq\LogCap_s(E).
$$
The corresponding capacity dimension is
\begin{equation}\label{eq:capacity-dimension-intro}
 \dim_{\mathrm{cap}}E
 :=\sup\{s>0:\LogCap_s(E)>0\}.
\end{equation}

Sadullaev and Rakhimov studied the logarithmic capacity of the non-Brjuno
set and proved the following upper estimate.
\begin{theorem}[Sadullaev--Rakhimov \cite{AS1}]\label{thm:SR}
For every $s>2$, we have $\LogCap_s(\NB)=0$.
Consequently,
$\cH^{h_\delta}(\NB)=0$
{for every} $\delta>2$.
\end{theorem}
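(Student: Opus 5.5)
We prove the capacity statement directly, by covering $\NB$ with sets that are small in the $k_s$-sense, and then deduce the Hausdorff statement from it.

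\textbf{Reduction to a single period and to compact sets.} Since $\NB$ is invariant under $x\mapsto x+1$, we have $\NB=\bigcup_{m\in\mathbb Z}(\NB\cap[m,m+1])$, and it suffices to treat $\NB\cap[0,1]$. By the definition of $\LogCap_s$ as a supremum over compact sets, we fix a compact $K\subset\NB\cap[0,1]$ and $\mu\in\cP(K)$, and show that $I_s(\mu)=\infty$ whenever $s>2$.

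\textbf{Covering of $\NB$.} If $\alpha\in\NB$, then $\sum_n \log Q_{n+1}/Q_n=\infty$. The convergent series $\sum_n 1/Q_n^{1-\varepsilon}$ (recall $Q_n$ grows at least like Fibonacci numbers) shows that infinitely often $\log Q_{n+1}\ge Q_n^{\varepsilon}$ fails to be excluded. More precisely, for a summable weight $\sum_q c_q<\infty$, divergence forces $\log Q_{n+1}\ge c_{Q_n}Q_n$ for infinitely many $n$. For each $n$ with this property,
\[
|\alpha-P_n/Q_n|<\frac{1}{Q_nQ_{n+1}}\le \frac{1}{Q_n}\exp(-c_{Q_n}Q_n).
\]
Hence $\NB\subset\limsup_q\bigcup_{p}B(p/q,r_q)$ with $r_q=q^{-1}e^{-c_q q}$. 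Note that $\log(1/r_q)\approx c_q q$.

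\textbf{Energy estimate.} Let $\mu\in\cP(K)$ with $I_s(\mu)<\infty$, and set $U(x)=\int k_s(x,y)\,d\mu(y)$. Then $U<\infty$ $\mu$-a.e. On the other hand, every $x\in K$ lies in infinitely many balls $B(p/q,r_q)$. Applying Chebyshev's inequality to the potential on a ball $B=B(p/q,r_q)$ gives
\[
\iint_{B\times B}k_s\,d\mu\,d\mu\ \ge\ \mu(B)^2\,(\log(1/2r_q))^{s}.
\]
Summing over $p,q$ and using $I_s(\mu)<\infty$ yields
\[
\sum_{q\ge Q}\sum_{p}\mu(B_{p,q})^2\,(c_qq)^s<\infty,
\]
with the left side tending to $0$ as $Q\to\infty$. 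Each $x\in K$ is covered, so $\sum_{q\ge Q}\sum_p\mu(B_{p,q})\ge1$ for every $Q$. Since there are at most about $q$ values of $p$ for each $q$, Cauchy--Schwarz gives
\[
1\le\Bigl(\sum_{q\ge Q}\sum_p\mu(B_{p,q})^2(c_qq)^s\Bigr)^{1/2}\Bigl(\sum_{q\ge Q} q\,(c_qq)^{-s}\Bigr)^{1/2}.
\]
With $c_q=1/(q\log^2 q)$, the weights $c_q$ are summable, and the second factor is $\sum_{q\ge Q} q^{1-s}c_q^{-s}$. This remains finite only if $q\,c_q^{-s}q^{-s}$ is summable, which requires $s>2$ together with $c_q$ close to $q^{-1}$. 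Concretely, we take $c_q=q^{-1}(\log q)^{-1-\eta}$ with $\eta>0$ small, which is summable. Then $q\,(c_qq)^{-s}=q(\log q)^{s(1+\eta)}$, which diverges, so this choice fails.

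\textbf{Correction via the Brjuno sum.} The per-$n$ structure must be used instead. One works with $\log Q_{n+1}/Q_n\ge 1/(n^{1+\eta})$ along infinitely many $n$, where the count of denominators $Q_n$ at level $n$ is exponential in $n$, and then balances the counting of convergent intervals against $(\log 1/r)^{s}$. This is the main obstacle: matching the cardinality of the cylinder intervals (at most $q^2$ pairs with $Q_n\sim q$, accounting for the $q\cdot q^{-1}$ Lebesgue mass) against the kernel decay $(\log Q_{n+1})^{-s}\sim (Q_n)^{-s}$. Balancing $\sum_q q\cdot q^{-s}\cdot(\text{polylog})$, which converges precisely when $s>2$, is exactly the threshold $2$. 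I would therefore redo the Cauchy--Schwarz step with $q^2$ in place of $q$ in the count, and with the sharp threshold $\log Q_{n+1}\gtrsim Q_n/\log^{2}Q_n$ where admissible.

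\textbf{Hausdorff consequence.} The Hausdorff statement follows from the standard Frostman-type comparison. If $\cH^{h_\delta}(\NB)>0$ for some $\delta>2$, Frostman's lemma gives a measure $\mu$ with $\mu(B(x,r))\le h_\delta(r)$. Then
\[
I_s(\mu)\le\int\sum_k\mu(A_k)\,k^{s}<\infty
\]
for $2<s<\delta$, where $A_k=\{y: e^{-k}\le|x-y|<e^{-k+1}\}$, since $\mu(A_k)\le k^{-\delta}$ and $\sum_k k^{s-\delta-1}$-type sums converge after summation by parts. This would give $\LogCap_s(\NB)>0$, contradicting the capacity statement.
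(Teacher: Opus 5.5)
The paper does not reprove this statement. It quotes the capacity assertion from \cite{AS1} and gets the Hausdorff assertion by the standard Frostman comparison. Your last paragraph is that comparison, and it is fine: summation by parts gives $I_s(\mu)<\infty$ for $s<\delta$, and you may take $s\in(2,\delta)$. The capacity assertion, however, is never proved. Your own computation shows that the Cauchy--Schwarz route fails, and the ``correction'' paragraph is only a plan.

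\textbf{First problem: the covering is vacuous.} You attach the summable weight to the denominator, i.e.\ you require $\log Q_{n+1}\ge c_{Q_n}Q_n$ with $\sum_q c_q<\infty$. For both of your choices of $c_q$ one has $c_qq\le\log q$, so $r_q=q^{-1}e^{-c_qq}\ge q^{-2}$. By Dirichlet's theorem, \emph{every} irrational then lies in $\limsup_q\bigcup_p B(p/q,r_q)$, so this covering says nothing about $\NB$. Note also that $\log(1/r_q)=\log q+c_qq\approx\log q$, not $c_qq$. The missing idea is to attach the summable weight to the index $n$. Since $\sum_n n^{-1-\eta}<\infty$, every $x\in\NB\setminus\Q$ satisfies $\log Q_{n+1}\ge n^{-1-\eta}Q_n$ for infinitely many $n$. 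Because $Q_n$ grows at least like the Fibonacci numbers, $n\le C\log Q_n$. Hence $|x-P_n/Q_n|<r_{Q_n}$ with $r_q:=q^{-1}\exp\bigl(-cq(\log q)^{-1-\eta}\bigr)$. For these radii, $\sum_q q\,(\log 1/r_q)^{-s}\lesssim\sum_q q^{1-s}(\log q)^{s(1+\eta)}$, which is finite exactly when $s>2$. Your correction gestures at this threshold but never establishes it.

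\textbf{Second problem: the energy step fails even with the right radii.} Summing $\iint_{B\times B}k_s\,d\mu\,d\mu$ over all $(p,q)$ is not dominated by $I_s(\mu)$. For fixed $q$ the balls are disjoint, but the product sets $B_{p,q}\times B_{p,q}$ for different $q$ overlap heavily; a ball around $1/2$ with small $q$ contains balls of many larger denominators. So the claim $\sum_{q\ge Q}\sum_p\mu(B_{p,q})^2(\log 1/r_q)^s\to0$ is unjustified. Also, putting $q^2$ in place of $q$ in the count, as you suggest, would push the threshold to $s>3$.

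What works is a linear estimate instead of Cauchy--Schwarz. Let $U(x)=\int k_s(x,y)\,d\mu(y)$. Since $k_s\ge1$ and $\int U\,d\mu=I_s(\mu)<\infty$, the set $K'=\{U\le 2I_s(\mu)\}$ has $\mu(K')\ge\frac12$. For $x\in K'$ and $0<r<1/e$ one has $\mu(B(x,r))\le 2I_s(\mu)(\log 1/r)^{-s}$. Every ball $B(p/q,r_q)$ meeting $K'$ lies in some $B(x,2r_q)$ with $x\in K'$. Finite energy excludes atoms, so $\mu(\Q)=0$. Hence for every large $Q$
\[
\tfrac12\le\mu(K'\setminus\Q)\le C\,I_s(\mu)\sum_{q\ge Q}(q+1)\Bigl(\log\frac1{r_q}\Bigr)^{-s},
\]
and the right-hand side tends to $0$ as $Q\to\infty$ when $s>2$. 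This contradiction gives $\LogCap_s(\NB)=0$. With these radii, the same covering also yields $\cH^{h_\delta}(\NB)=0$ for $\delta>2$ directly by a Hausdorff--Cantelli estimate, without passing through capacity.
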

The second assertion follows from the first by the standard Frostman
comparison; see, for example, \cite{L72}. A recent iterated-logarithmic refinement was obtained by Akramov and
Ashirov \cite{AA26}. The remaining questions concern the critical exponent $2$ and the corresponding
lower bounds. We answer these questions and determine the exact critical
exponent for both the logarithmic Hausdorff measure and logarithmic capacity
in the following theorem.

\begin{theorem}\label{thm:main}
Let $\NB=\R\setminus\Bri$ be the non-Brjuno set, and let $J\subset\R$ be a
nondegenerate interval.
\begin{enumerate}[label=\textup{(\roman*)}]
 \item For every $0<\delta\leq2$, we have
 $$
  \cH^{h_\delta}(\NB\cap J)=\infty.
 $$

 \item For every $\delta>2$, we have
 $$
  \cH^{h_\delta}(\NB)=0.
 $$
 \item The critical exponent for logarithmic capacity is $2$; more precisely,
$$
\LogCap_s(\NB\cap J)>0 \quad \text{for } 0<s\leq2,
\qquad
\LogCap_s(\NB\cap J)=0 \quad \text{for } s>2.
$$
 Consequently,
 $$
  \LogCap_2(\NB)>0,
  \qquad
  \dim_{\mathrm{cap}}(\NB\cap J)
  =\dim_{\mathrm{cap}}\NB=2.
 $$
\end{enumerate}
\end{theorem}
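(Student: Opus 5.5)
Parts (ii) and the vanishing half of (iii) follow directly from Theorem~\ref{thm:SR}: $\LogCap_s(\NB\cap J)\le\LogCap_s(\NB)=0$ for $s>2$, and $\cH^{h_\delta}(\NB)=0$ for $\delta>2$. The positive statements are the real content. Since $h_\delta\ge h_2$ near $0$ for $\delta\le2$, it suffices to prove $\cH^{h_2}(\NB\cap J)=\infty$. For capacity, monotonicity $\LogCap_s\ge\LogCap_2$ for $s\le 2$ reduces everything to $\LogCap_2(\NB\cap J)>0$. That in turn follows once we construct a compact Cantor set $K\subset\NB\cap J$ carrying a probability measure $\mu$ with $I_2(\mu)<\infty$. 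The dimension statements then follow from (iii) and the definition \eqref{eq:capacity-dimension-intro}.

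The construction is a limsup/Cantor set by continued fractions. We choose a rapidly growing sequence of scales and force, at selected levels $n_k$, a large partial quotient: $Q_{n_k+1}\approx \exp(c\,Q_{n_k})$. Then the single term $\log Q_{n_k+1}/Q_{n_k}\ge c$ appears infinitely often and the Brjuno series \eqref{eq:Brjuno-intro} diverges. Concretely, at stage $k$ every surviving interval $I$ of length $\ell$ is replaced by a subfamily of rationals $P/Q\in I$ with $Q$ in a dyadic window $[N_k,2N_k]$, where $N_k$ is huge compared to $1/\ell$. By equidistribution of Farey fractions there are $\asymp \ell N_k^2$ of them, pairwise separated by $\gtrsim N_k^{-2}$. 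Around each we keep the interval of $\alpha$ with $|\alpha-P/Q|\approx (Q\,e^{cQ})^{-1}$, which forces the next denominator to be $\approx e^{cQ}$. Points in the limit set are then non-Brjuno, and choosing the window inside $J$ at the first step gives $K\subset J$. The measure $\mu$ distributes mass uniformly among children at each stage.

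To get the critical exponent exactly $2$, and not merely every $\delta<2$, the choice of $c$ and of the number of children matters. An interval at stage $k$ has length $r_k\approx e^{-cN_k}$, so $\log(1/r_k)\approx cN_k$ and $h_2(r_k)\approx (cN_k)^{-2}$. Its mass is roughly $\prod_j (\ell_{j-1}N_j^2)^{-1}$. Dominated by the last factor, this is about $N_k^{-2}$ times an earlier-stage constant, which is comparable to $h_2(r_k)$ up to a constant factor. This matching, with $Q^{-2}$ counting against $(\log Q_{n+1})^{-2}\approx (cQ)^{-2}$, is exactly why $2$ is critical. The key estimate is a mass distribution bound $\mu(B(x,r))\lesssim C\,h_2(r)$ for all small $r$, checked separately for $r$ between $r_k$ and the separation scale $N_{k+1}^{-2}$, and for $r$ between the separation scale and $r_{k}$ via Farey counting: a ball of radius $r$ meets $\lesssim rN^2+1$ children. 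Since the ratio mass$/h_2$ is only bounded at the critical exponent, two finer tools are needed. For $\cH^{h_2}=\infty$, we take $c$ small, so that $c^{-2}$ is large, and apply the mass distribution principle, which gives $\cH^{h_2}(K)\gtrsim c^2/C$. Alternatively we use the mass transference principle mentioned in the keywords, transferring full Lebesgue measure of the corresponding $\limsup$ of rationals with approximation radius $Q^{-2}$ to the gauge $h_2$. For capacity, we need
$$I_2(\mu)=\int\!\!\int (\log 1/|x-y|)^2\,d\mu\,d\mu\lesssim \sum_k \mu\otimes\mu\{|x-y|\approx r_k\}\,(cN_{k+1})^2 .$$

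The main obstacle is summability at the critical exponent. With bounds only of the form $\mu(B)\lesssim h_2$, the energy sum diverges logarithmically. I would fix this by making the mass split at each stage sparse enough. Concretely, I would place only a fraction $1/k^2$ of the available Farey fractions per stage, or let $c=c_k\to 0$ slowly, while keeping $\sum\log Q_{n+1}/Q_n\ge\sum c_k=\infty$. This produces a gain factor making the stage-$k$ energy contribution $\lesssim c_k^2\cdot(\text{const})$ or $k^{-2}$, hence summable, while still preserving divergence of the Brjuno sum. Balancing these two requirements, for instance with $c_k=1/k$, is the delicate step. The same choice with $c_k$ small should also make the Hausdorff measure infinite, using the local ratio $\mu(B)/h_2(r)\to 0$.
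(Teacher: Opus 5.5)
Your reductions are correct: part (ii) and the vanishing half of (iii) come from Theorem~\ref{thm:SR}, and monotonicity in $\delta$ and $s$ reduces everything to $\cH^{h_2}(\NB\cap J)=\infty$ and $\LogCap_2(\NB\cap J)>0$. The mass-transference alternative you mention for (i) is essentially the paper's route. The paper uses balls $B(p/q,e^{-q}/q^2)$, Legendre's theorem to place the limsup set inside $\NB$, and Khinchin's theorem for the $h_2$-enlarged balls of radius $(q+2\log q)^{-2}$.

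\textbf{The gap.} The Cantor construction, on which your capacity argument rests, cannot work. Suppose each parent $I$ of length $\ell$ is refined using a single window $Q\in[N,2N]$. Then all $\asymp\ell N^2$ children $J_a$ have $\log(1/|J_a|)\asymp cN$. Cauchy--Schwarz gives, for \emph{any} weights with $\sum_a w_a=1$,
\[
\sum_a w_a^2\log^2|J_a|\;\geq\;\Bigl(\sum_a\log^{-2}|J_a|\Bigr)^{-1}\;\asymp\;\frac{c^2}{\ell}.
\]
Since $I_2(\mu)\geq S_k:=\sum_{|v|=k}\mu(I_v)^2\log^2|I_v|$ for every $k$, this gives $S_k\gtrsim S_{k-1}\cdot\min_I c^2/(|I|\log^2|I|)$, the minimum running over stage-$(k-1)$ intervals. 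That factor is astronomically large, so $I_2(\mu)=\infty$.

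Equivalently, with uniform weights $\mu(J_a)/h_2(|J_a|)\asymp c^2\mu(I)/|I|$, which is unbounded. So your claimed bound $\mu(B(x,r))\lesssim h_2(r)$ already fails at $r=r_k$: the suppressed ``earlier-stage constant'' is $\mu(I_{k-1})/|I_{k-1}|\to\infty$. The same failure kills the mass-distribution route to (i).

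Your remedies do not help:
\begin{itemize}
\item Discarding Farey fractions shrinks $\sum_a\log^{-2}|J_a|$ and so raises the lower bound.
\item Taking $c_k=1/k$ leaves $c_k^2/\ell_{k-1}$ enormous, since $\ell_{k-1}\approx e^{-c_{k-1}N_{k-1}}$.
\item Taking $c_k$ small enough to beat $\ell_{k-1}$ forces $\sum_k c_k<\infty$, so the forced terms no longer make the Brjuno series diverge.
\end{itemize}

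\textbf{The missing idea.} A single refinement step must produce children whose total $h_2$-content $\sum_a\log^{-2}|J_a|$ is large, not of order $\ell$. That requires an extremely wide range of denominators at once.

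The paper uses all primes $q\in[Q,e^{Q/8}]$. This range is still narrow enough that the balls of radius $e^{-q}/(8q^2)$ are disjoint, because fractions with denominators $\leq R=\lfloor e^{Q/8}\rfloor$ are $R^{-2}$-separated. The weights are $w\propto q^{-2}$, so the normalizer is $Z\asymp\ell\sum_q 1/q\asymp\ell\log Q\to\infty$ by Mertens. Hence the diagonal term is at most $C/Z\leq\eta$.

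One must also bound the interaction between distinct children by $C\log^2|I|$. The paper does this with the counting Lemma~\ref{lem:grid-estimate}: for distinct primes $q,q'$ the map $(p,p')\mapsto pq'-p'q$ is injective, so the cross sums are controlled by $\sum_{k\leq M}\log^2(qq'/k)$. These two estimates give $S_n\leq 2^{-n}$ and a convergent tree sum; your proposal has no substitute for either.
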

As noted above, part~\textup{(ii)} and the vanishing
$$
\LogCap_s(\NB\cap J)=0, \qquad s>2,
$$
follow directly from the main result of \cite{AS1}. The remaining assertions
are established by two independent lower-bound arguments. First, we consider
the limsup set
$$\mathcal W
 :=\left\{x\in[0,1]:
 \left|x-\frac pq\right|<\frac{e^{-q}}{q^2}
 \text{ for infinitely many }(p,q)\in\mathbb Z\times\mathbb N
 \right\}.$$
Every irrational point of $\mathcal W$ has infinitely many convergents
whose individual contributions to the Brjuno series are bounded away from
zero; hence $\mathcal W\subset\NB$.  Khinchin's theorem (see \cite{Kh64}) and the Mass
Transference Principle (see \cite[Theorem~2]{BV06}) then give
$\cH^{h_2}(\mathcal W\cap J)=\infty$, and the whole range
$0<\delta\leq2$ follows simply from $h_\delta\geq h_2$.

The critical capacity does not follow from the Hausdorff estimate by the
standard Frostman comparison, because that comparison loses a strict amount
in the exponent.  We therefore construct, inside every interval, a compact
subset of $\mathcal W$ supporting a probability measure with finite
order-two energy.  The main arithmetic ingredient is Proposition~\ref{prop:prime-refinement},
based on a refinement using prime denominators.

The paper is organized as follows. In Section~2, we recall the necessary
background on continued fractions, Hausdorff measures, metric Diophantine
approximation, and the Mass Transference Principle. In Section~3, we prove
the lower bound for the logarithmic Hausdorff measure and determine the
critical Hausdorff exponent. In Section~4, we prove positivity of the
logarithmic capacity at the critical exponent and complete the proof of the
main theorem. The proof of the prime-denominator refinement proposition is
given at the end of Section~4.

\subsection*{Acknowledgements} This research was supported by the Ministry of Higher Education, Science and
Innovation of the Republic of Uzbekistan through the Fundamental Research
Projects IL-5421101746 and FL-9524115114.

\section{Preliminaries}

Let $x\in\R\setminus\Q$ and write
$$
 x=[a_0;a_1,a_2,\ldots].
$$
Its \textit{convergents} are denoted by
$$
 \frac{P_n}{Q_n}=[a_0;a_1,\ldots,a_n].
$$
The convergents $P_n/Q_n$ are always written in reduced form. Rational
approximants $p/q$ occurring below, however, are not necessarily reduced. We use the following standard facts; see, for example,
\cite{Kh64,C57}.

\begin{lemma}\label{lem:cf-facts}
Let $x\in\R\setminus\Q$, and let $P_n/Q_n$ be its convergents.
Then $Q_n\to\infty$,
\begin{equation}\label{eq:cf-two-sided}
 \frac{1}{Q_n(Q_n+Q_{n+1})}
 <\left|x-\frac{P_n}{Q_n}\right|
 <\frac{1}{Q_nQ_{n+1}},
\end{equation}
and every  rational number $a/b$ satisfying
$$ \left|x-\frac ab\right|<\frac{1}{2b^2}
$$
is a convergent of $x$.
\end{lemma}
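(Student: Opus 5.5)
The statement just above is Lemma~\ref{lem:cf-facts}, a collection of classical facts. I will derive all three from the recursion $Q_{n+1}=a_{n+1}Q_n+Q_{n-1}$ (with $Q_{-1}=0$, $Q_0=1$), the determinant identity $P_{n+1}Q_n-P_nQ_{n+1}=(-1)^n$, and the complete-quotient representation
$$
x=\frac{x_{n+1}P_n+P_{n-1}}{x_{n+1}Q_n+Q_{n-1}},\qquad x_{n+1}=[a_{n+1};a_{n+2},\ldots]\in(a_{n+1},a_{n+1}+1).
$$

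\emph{Growth.} Since $x$ is irrational, the expansion does not terminate and $a_n\ge1$ for $n\ge1$. Hence $Q_{n+1}\ge Q_n+Q_{n-1}$, and induction gives $Q_n\ge F_{n+1}$, a Fibonacci number. In particular $Q_n\to\infty$.

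\emph{Two-sided bound.} Subtracting $P_n/Q_n$ from the representation above and using the determinant identity yields
$$
\left|x-\frac{P_n}{Q_n}\right|=\frac{1}{Q_n(x_{n+1}Q_n+Q_{n-1})}.
$$
The denominator factor satisfies $x_{n+1}Q_n+Q_{n-1}>a_{n+1}Q_n+Q_{n-1}=Q_{n+1}$, which gives the upper bound. It also satisfies $x_{n+1}Q_n+Q_{n-1}<(a_{n+1}+1)Q_n+Q_{n-1}=Q_{n+1}+Q_n$, which gives the lower bound. Both inequalities are strict because $x_{n+1}$ is irrational.

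\emph{Legendre's criterion.} This is the only step requiring an argument rather than a computation, and I will follow the standard best-approximation route. Take $a/b$ with $b\ge1$ and $|x-a/b|<1/(2b^2)$; we may reduce the fraction, since reducing it only strengthens the hypothesis. Choose $n$ with $Q_n\le b<Q_{n+1}$, which is possible because $Q_n\to\infty$. The key fact is the best-approximation property: if $0<b<Q_{n+1}$, then $|bx-a|\ge|Q_nx-P_n|$. To prove it, solve $b=uQ_n+vQ_{n+1}$ and $a=uP_n+vP_{n+1}$ in integers, using the unimodularity of the matrix. If $v=0$, then $u\ge1$ and the claim is immediate. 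If $v\ne0$, then $b<Q_{n+1}$ forces $u$ and $v$ to have opposite signs. Since $Q_nx-P_n$ and $Q_{n+1}x-P_{n+1}$ also have opposite signs, the two terms in $bx-a=u(Q_nx-P_n)+v(Q_{n+1}x-P_{n+1})$ have the same sign. Therefore $|bx-a|\ge|u|\,|Q_nx-P_n|\ge|Q_nx-P_n|$.

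Now suppose $a/b\ne P_n/Q_n$. Then
$$
\frac{1}{bQ_n}\le\left|\frac ab-\frac{P_n}{Q_n}\right|\le\left|x-\frac ab\right|+\left|x-\frac{P_n}{Q_n}\right|.
$$
By the best-approximation property, $|x-P_n/Q_n|\le (b/Q_n)\,|x-a/b|$, so the right-hand side is less than $\frac{1}{2b^2}+\frac{1}{2bQ_n}$. This gives $\frac{1}{bQ_n}<\frac{1}{2b^2}+\frac{1}{2bQ_n}$, that is, $\frac{1}{2bQ_n}<\frac{1}{2b^2}$, i.e.\ $b<Q_n$. This contradicts the choice of $n$, so $a/b=P_n/Q_n$ is a convergent.
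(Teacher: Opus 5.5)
Your proof is correct. The paper gives no argument for this lemma; it quotes it from Khinchin and Cassels. What you wrote is the standard textbook proof found there: the complete-quotient identity for the two-sided bound, and Legendre's criterion via the best-approximation property. Two small details are worth stating explicitly: in the best-approximation step the case $u=0$ is excluded because it would give $b=vQ_{n+1}\ge Q_{n+1}$, and the choice of $n$ with $Q_n\le b<Q_{n+1}$ uses $Q_0=1\le b$ together with the monotonicity of $(Q_n)$.
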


The last assertion is Legendre's theorem.  An irrational number $x$ is
\textit{Brjuno} when the series in \eqref{eq:Brjuno-intro} converges.  It is easy to check that the sets $\Bri$ and $\NB$ are Borel and moreover
\begin{equation}\label{eq:periodicity}
 x\in\Bri\quad\Longleftrightarrow\quad x+m\in\Bri
 \qquad \text{for }m\in\mathbb Z.
\end{equation}

\subsection{Hausdorff measures and metric approximation}

A \emph{dimension function} is a continuous nondecreasing function
$h:[0,\infty)\to[0,\infty)$ with $h(0)=0$.  Consider a cover of $E$ by a finite or countable collection of open balls  $\{B_j:=B(x_j, r_j)\}_{j=1}^m$ such that $r_j < \varepsilon$ for all $1 \le j \le m$, where $m\in \mathbb N\cup\{\infty\}$ depends on the chosen cover. Define 
$$\mathcal H^{h}(E,\varepsilon)=\inf\left\{\sum_{j=1}^m h(r_j):\text{ } \bigcup_{j=1}^m{B_j}\supset E \right\}.$$
As $\varepsilon\downarrow0$, the quantity
$\mathcal H^h(E,\varepsilon)$ is nondecreasing. Hence the limit
$$
\mathcal H^h(E)
:=\lim_{\varepsilon\to0+}\mathcal H^h(E,\varepsilon)
$$
exists and is called the \textit{$h$-Hausdorff measure} of $E$.  When $h_{\alpha}(t)=t^\alpha,$  $\alpha>0$,  the measure $\mathcal H^{h_\alpha}(E)$ is known as the  classical $\alpha$-Hausdorff measure of $E$.

Each $h_\delta$ in \eqref{eq:h-delta} is a dimension function. Moreover,
$$
\lim_{r\to0+}\frac{h_\delta(r)}{r}
=\infty$$
and $r\mapsto h_\delta(r)/r$ is monotonic for all sufficiently small $r$.

We shall use the divergence part of the following Khinchin's theorem.

\begin{theorem}[Khinchin \cite{Kh64}]\label{thm:Khinchin}
Let $\psi:\N\to[0,\infty)$ be eventually nonincreasing, such that
$\sum\limits_{q=1}^\infty\psi(q)=\infty$.  Then the set
$$
 W(\psi):=
 \left\{x\in[0,1]:
 \left|x-\frac pq\right|<\frac{\psi(q)}q
 \text{ for infinitely many }(p,q)\in\mathbb Z\times\mathbb N
 \right\}
$$
has full Lebesgue measure in $[0,1]$.
\end{theorem}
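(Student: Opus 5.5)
The plan is the classical ``quasi-independence plus zero--one law'' route. Only the divergence part of Theorem~\ref{thm:Khinchin} is needed. Since enlarging $\psi$ only enlarges $W(\psi)$, we may first normalize $\psi$.
\begin{itemize}[leftmargin=*]
\item If $\psi$ does not tend to $0$, then, being eventually nonincreasing, $\psi(q)\ge c>0$ for all large $q$. In that case replace $\psi$ by $\tilde\psi(q)=\min\{\psi(q),(q\log q)^{-1}\}$, which is eventually nonincreasing, still has divergent sum, and satisfies $W(\tilde\psi)\subset W(\psi)$.
\item So we may assume $\psi(q)\to0$, and in particular $\psi(q)<1/2$ for large $q$.
\end{itemize}

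For each $q$ let
$$A_q=\bigcup_{\substack{0\le p\le q\\ \gcd(p,q)=1}}\Bigl(\frac pq-\frac{\psi(q)}q,\frac pq+\frac{\psi(q)}q\Bigr)\cap[0,1],$$
so that $\limsup A_q\subset W(\psi)$ and $|A_q|\asymp\varphi(q)\psi(q)/q$. The first step is to show
$$\sum_q \frac{\varphi(q)\psi(q)}{q}=\infty.$$
For this, split $q$ into dyadic blocks $[2^k,2^{k+1})$. In each block use $\sum_{2^k\le q<2^{k+1}}\varphi(q)/q\gg 2^k$, which follows from $\sum_{q\le N}\varphi(q)=\tfrac{3}{\pi^2}N^2+O(N\log N)$. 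Monotonicity of $\psi$ then lets us compare the block sum with $2^k\psi(2^{k+1})$, and the sum of these is $\gg\sum_q\psi(q)=\infty$ by Cauchy condensation. This is exactly where the monotonicity hypothesis is used.

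The second step, which I expect to be the main obstacle, is the quasi-independence estimate
$$|A_q\cap A_r|\ll |A_q|\,|A_r|\qquad(q\neq r).$$
I would prove it by counting pairs $(p,s)$ with $|p/q-s/r|<\psi(q)/q+\psi(r)/r$, that is, $|pr-sq|<\Delta:=r\psi(q)+q\psi(r)$. For each admissible value $t=pr-sq$, with $t$ a multiple of $d=\gcd(q,r)$, there are about $d$ solutions. Each contributes overlap at most $2\min(\psi(q)/q,\psi(r)/r)$. Summing over $|t|<\Delta$ gives the bound $\ll\psi(q)\psi(r)$, plus a boundary term present only when $\Delta<d$. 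Handling that boundary term requires the reducedness of $p/q$ and $s/r$: two distinct reduced fractions satisfy $|p/q-s/r|\ge1/\operatorname{lcm}(q,r)$. One must then check that $\psi(q)\psi(r)\ll\varphi(q)\psi(q)\varphi(r)\psi(r)/(qr)$ on average, using the Erd\H{o}s--Vaaler-type bound for $\varphi$ summed over pairs. In practice I would only prove the averaged form
$$\sum_{q,r\le N}|A_q\cap A_r|\ll\Bigl(\sum_{q\le N}|A_q|\Bigr)^2,$$
which is all that is needed next.

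Given the averaged estimate, the Chung--Erd\H{o}s (Kochen--Stone) inequality gives $|\limsup A_q|\ge c>0$. The last step upgrades positive measure to full measure. I would apply Gallagher's zero--one law, or Cassels' lemma, which says that $\limsup$ sets of this type have measure $0$ or $1$. Alternatively, apply the same argument inside an arbitrary subinterval $I$ to get $|\limsup A_q\cap I|\ge c|I|$ with $c$ independent of $I$, and conclude by the Lebesgue density theorem.
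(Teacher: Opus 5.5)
The paper does not prove this statement; it quotes it from Khinchin's book. There the divergence half comes from the metric theory of continued fractions. One shows that almost every $x$ has $a_{n+1}\ge 1/(Q_n\psi(Q_n))$ for infinitely many $n$, so that $|x-P_n/Q_n|<1/(a_{n+1}Q_n^2)\le\psi(Q_n)/Q_n$. This combines the Borel--Bernstein theorem for partial quotients with the almost-everywhere bound $Q_n\le e^{Bn}$. That route gives full measure directly, with no zero--one law. However, replacing $Q_n$ by $e^{Bn}$ is where monotonicity of $q\psi(q)$, rather than of $\psi$, is used.

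Your route is overlap counting over reduced fractions, then Chung--Erd\H{o}s, then Gallagher's or Cassels' zero--one law. This is the proof of the Duffin--Schaeffer theorem specialised to monotone $\psi$, and it is correct. It works with exactly the hypothesis as stated; note that the paper's $\psi(q)=q/(q+2\log q)^2$ has $q\psi(q)$ increasing. It is also the method that survives for non-monotone $\psi$. Its price is importing the zero--one law as a black box.

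Three steps should be stated more accurately.

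First, your counting gives $|A_q\cap A_r|\le 16\,\psi(q)\psi(r)$ for $q\neq r$. The nonzero multiples $t$ of $d$ in $(-\Delta,\Delta)$ number at most $2\Delta/d$, each $t$ gives at most $d+1$ pairs, and $\Delta\min\{\psi(q)/q,\psi(r)/r\}\le 2\psi(q)\psi(r)$. There is no separate boundary term: reducedness removes $t=0$, and then for $\Delta\le d$ there is no admissible $t$ at all. This bound is not $\ll|A_q||A_r|$ pointwise, since its ratio to $|A_q||A_r|$ is $\asymp qr/(\varphi(q)\varphi(r))$, which is unbounded. So drop the pointwise claim, as you anticipated.

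Second, the averaged estimate needs no Erd\H{o}s--Vaaler input. Apart from the diagonal $\sum_{q\le N}|A_q|$, it is just the square of the regularity condition $\sum_{q\le N}\psi(q)\ll\sum_{q\le N}\varphi(q)\psi(q)/q$. For monotone $\psi$ this follows from the same dyadic comparison as your first step, matching block $k$ on the left against block $k-1$ on the right. This is the second place where monotonicity enters.

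Third, the Lebesgue-density alternative is not \emph{the same argument}. Inside a short interval $I$, the number of admissible $p$ for a given $t$ is only bounded by $d|I|+1$. The $+1$ produces terms of size $\psi(q)\psi(r)/\gcd(q,r)$ that lack the factor $|I|$, so an extra equidistribution argument would be needed. Rely on the zero--one law instead.
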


Let $B=B(x,r)=(x-r,x+r)$ be the ball (interval) of radius $r>0$ centered at $x\in\mathbb R$, and let $h$ be a dimension function.  We write
$B^h:=B(x,h(r))$. For a sequence of sets $(E_i)$, we write
$$
\limsup_{i\to\infty} E_i
:=
\bigcap_{N=1}^{\infty}\bigcup_{i\ge N} E_i.
$$
Thus, a point belongs to $\limsup\limits_{i\to\infty}E_i$ if and only if it
belongs to infinitely many of the sets $E_i$. The following is the one-dimensional case of the theorem of Beresnevich and Velani
\cite[Theorem~2]{BV06}.

\begin{theorem}[Mass Transference Principle \cite{BV06}]\label{thm:MTP}
Let $\Omega\subset\R$ be an interval and let $(B_i)$ be a sequence of
balls with radii tending to zero.  Let $h$ be a dimension function such
that $r\mapsto h(r)/r$ is monotonic for all sufficiently small $r$.
Assume that
$$
 \left|J\cap\limsup_{i\to\infty}B_i^h\right|=|J|
$$
for every interval $J\subset\Omega$.  Then
$$
 \cH^h\left(J\cap\limsup_{i\to\infty}B_i\right)=\cH^h(J)
$$
for every interval $J\subset\Omega$.
\end{theorem}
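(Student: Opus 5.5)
The statement is the one-dimensional case of \cite[Theorem~2]{BV06}. To prove it, I would follow the Cantor-set construction of Beresnevich--Velani combined with the mass distribution principle.

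First I would dispose of the trivial regime. By the monotonicity assumption, $h(r)/r$ has a limit $\ell\in[0,\infty]$ as $r\to0$. If $\ell=0$, then $\cH^h\equiv0$ and there is nothing to prove. If $0<\ell<\infty$, then $\cH^h$ is comparable to Lebesgue measure. In that case $B_i^h$ and $B_i$ differ only by a bounded dilation of the radius, and a standard Lebesgue density argument transfers full measure of $\limsup B_i^h$ to full measure of $\limsup B_i$ inside $J$. The substantive case is $\ell=\infty$, which is the one used for $h_\delta$. There $\cH^h(J)=\infty$, so it suffices to show that $\cH^h(J\cap\limsup B_i)\geq\eta$ for every $\eta>0$.

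The basic tool is a covering lemma, proved by Vitali's $5r$-covering together with the hypothesis $|J'\cap\limsup B_i^h|=|J'|$ applied to every subinterval. The lemma states that for every interval $I\subset\Omega$ and every $G\in\N$ there is a finite family of indices $i\geq G$ with the following properties:
\begin{itemize}
\item the balls $3B_i^h$ are pairwise disjoint and contained in $I$;
\item $\sum_i |B_i^h|\geq c|I|$ for an absolute constant $c>0$.
\end{itemize}
Since $|B_i^h|=2h(r_i)$, this says $\sum_i h(r_i)\gtrsim |I|$.

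Using the lemma I would build nested finite families $K(1)\supset K(2)\supset\cdots$ of closed intervals. At level $n+1$, inside each chosen interval $L$ from level $n$, I would apply the covering lemma repeatedly to many disjoint subintervals of $L$, with $G$ large, and keep the balls $B_i$ themselves rather than their dilations $B_i^h$. The number of applications is chosen so that the total $h$-weight $\sum h(r_i)$ of the kept balls inside $L$ is comparable to $h(\text{radius of }L)$, multiplied by $\eta$ at the first level. This is possible because $h(r)/r\to\infty$ leaves room for many tiny balls. The limit set $K=\bigcap_n\bigcup K(n)$ lies in $J\cap\limsup B_i$, because every point is covered by balls $B_i$ with arbitrarily large indices. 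I would define $\mu$ by splitting the mass of each level-$n$ interval among its children in proportion to their weights $h(r_i)$.

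The main obstacle is the estimate $\mu(B(x,r))\lesssim h(r)/\eta$ for an arbitrary ball, after which the mass distribution principle gives $\cH^h(K)\gtrsim\eta$. I would split into cases according to the size of $r$ relative to the level-$n$ interval $L$ containing $x$ and its children.
\begin{itemize}
\item If $r$ is below the radius of the children, then $B(x,r)$ meets only one child, and the estimate follows by induction.
\item If $r$ lies between the child scale and the scale of $L$, the separation of the $3B_i^h$ bounds the number of children met by $B(x,r)$. Their total mass is then at most a multiple of $\frac{r}{|L|}\mu(L)$, and the monotonicity of $h(r)/r$ converts this into $\lesssim h(r)$.
\item If $r$ exceeds the scale of $L$, I would use the level-$n$ estimate directly.
\end{itemize}
The delicate point is the middle case: one must check that the proportional splitting keeps the density $\mu/\text{length}$ inside $L$ controlled by $h(\text{radius of }L)/|L|$, and that no constants accumulate across levels.
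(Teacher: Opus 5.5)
The paper gives no proof of this statement; it quotes the one-dimensional case of \cite[Theorem~2]{BV06}. Your handling of the regimes $\lim h(r)/r\in[0,\infty)$ and your covering lemma are fine. The Cantor construction you describe for the case $h(r)/r\to\infty$, however, cannot work; write $\rho(\cdot)$ for the radius of an interval. If, inside a parent interval $L$, you apply the covering lemma to pairwise disjoint subintervals, then all the dilations $B_i^h$ of the children are pairwise disjoint and lie in $L$. Hence
\[
\sum_i h(r_i)=\tfrac12\sum_i|B_i^h|\le\tfrac12|L|=\rho(L),
\]
however many subintervals you use. Since $h(r)/r\to\infty$, this is $o\bigl(h(\rho(L))\bigr)$. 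So the target ``total weight comparable to $h(\rho(L))$'' is unattainable, and so is the target $\gtrsim\eta$ at the first level. The remark that $h(r)/r\to\infty$ ``leaves room for many tiny balls'' is false under your disjointness. In fact the set itself is too thin: one gets $\sum_{L'\in K(n+1)}h(\rho(L'))\le\sum_{L\in K(n)}\rho(L)\le\varepsilon_n\sum_{L\in K(n)}h(\rho(L))$ with $\varepsilon_n\to0$, so the level-$n$ covers show $\cH^h(K)=0$. The ``accumulation of constants'' you flag is really a loss of the unbounded factor $h(\rho(L))/\rho(L)$ at every generation, and no choice of $\mu$ can repair it.

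The missing idea, which is how \cite{BV06} proceed, is that the $h$-dilations of the children must overlap. The children of $L$ are built in many \emph{sub-levels}. Each sub-level comes from applying the covering lemma to a packing of most of $\frac12L$ by small intervals that avoid the thickened intervals $3B_i$ (not $3B_i^h$) chosen at earlier sub-levels. These excluded intervals occupy a negligible part of $L$, because $\sum r_i\ll\sum h(r_i)$ once radii are small. So each sub-level alone contributes $\sum h(r_i)\asymp|L|$. One takes roughly $h(\rho(L))/\rho(L)$ sub-levels, so that the total is at least $h(\rho(L))$; this normalisation is also what stops constants from accumulating, since then $\mu(L')\le\mu(L)\,h(\rho(L'))/h(\rho(L))$. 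Disjointness of the $3B_i^h$ now holds only within a sub-level; across sub-levels only the $3B_i$ are disjoint, and later sub-levels should use much smaller radii. Consequently your middle case cannot be done by counting children met by $B(x,r)$; it must be done sub-level by sub-level. Let $A$ be a ball meeting two children, so every child $B$ meeting it has $\rho(B)\le\rho(A)$. In each sub-level, the children whose dilation is no larger than $A$ contribute $O(\rho(A))$ to $\sum_{B\cap A\neq\varnothing}h(\rho(B))$, by disjointness of their dilations. The scale separation ensures that at most one child $B_*$ overall has $\rho(B_*)\le\rho(A)<h(\rho(B_*))$. Only this yields $\mu(A)\lesssim\mu(L)\,\rho(A)/\rho(L)+\mu(B_*)\lesssim h(\rho(A))/\eta$, using the monotonicity of $h(r)/r$.
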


\section{The critical logarithmic Hausdorff exponent}

Define
\begin{equation}\label{eq:W-def}
 \mathcal W
 :=[0,1]\cap
 \bigcap_{N=1}^{\infty}\bigcup_{q\geq N}\bigcup_{p=0}^{q}
 B\left(\frac pq,\frac{e^{-q}}{q^2}\right).
\end{equation}
Thus $x\in\mathcal W$ when
$\lvert x-p/q\rvert<e^{-q}/q^2$ for infinitely many pairs with
$q\geq1$ and $0\leq p\leq q$.

\begin{lemma}\label{lem:W-subset}
Let $\mathcal W$ be as in \eqref{eq:W-def}. Then we have
$$
 \mathcal W\subset\NB\cap[0,1].
$$
\end{lemma}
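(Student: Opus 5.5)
Fix $x\in\mathcal W$. We must show $x$ is irrational and that the Brjuno series \eqref{eq:Brjuno-intro} diverges. The plan is to turn each good rational approximation $p/q$ into a convergent $P_n/Q_n$ of $x$, and then read off a large next denominator $Q_{n+1}$ from the two-sided estimate \eqref{eq:cf-two-sided}.

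\emph{Irrationality.} Suppose $x=a/b$ with $b\ge1$. For any $p/q\neq a/b$ we have $|x-p/q|\ge 1/(bq)$. Since $e^{-q}/q^2<1/(bq)$ once $q$ is large, only finitely many such pairs $(p,q)$ can satisfy the defining inequality. The pairs with $p/q=a/b$ exactly must also be excluded. I expect this to be the main subtlety, because the definition of $\mathcal W$ allows non-reduced $p/q$, and then every $q=kb$ works with distance $0$. In that case rationals lie in $\mathcal W$, and the statement can only be read as a claim about irrational points, so that $\mathcal W\setminus\Q\subset\NB$. The paper's introduction phrases it this way ("every irrational point of $\mathcal W$"). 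So I would restrict to irrational $x$, noting that rationals are not in $\Bri$ anyway. Since $\NB=\R\setminus\Bri$ contains $\Q$ by definition, the inclusion $\mathcal W\subset\NB$ in fact holds literally, and the rational case is trivial.

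\emph{Irrational $x$: producing convergents.} There are infinitely many pairs $(p,q)$ with $|x-p/q|<e^{-q}/q^2$, and since $x$ is irrational, $q\to\infty$ along them. Write $p/q=a/b$ in lowest terms, so $b\le q$. Then
$$|x-a/b|<e^{-q}/q^2\le 1/(2b^2)$$
for $q\ge1$, because $e^{-q}\le 1/2$ for $q\ge1$ and $q^2\ge b^2$. By Legendre's theorem (Lemma~\ref{lem:cf-facts}), $a/b=P_n/Q_n$ with $Q_n=b$. Infinitely many pairs give infinitely many distinct reduced fractions, because each fixed $a/b\neq x$ admits only finitely many such $q$. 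Hence we obtain infinitely many indices $n$.

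\emph{Bounding $Q_{n+1}$.} For such an $n$, the lower bound in \eqref{eq:cf-two-sided} gives
$$\frac{1}{Q_n(Q_n+Q_{n+1})}<\frac{e^{-q}}{q^2}\le \frac{e^{-Q_n}}{Q_n^2},$$
using $q\ge Q_n$. Therefore $Q_n+Q_{n+1}>Q_ne^{Q_n}$, and so $Q_{n+1}\ge Q_n(e^{Q_n}-1)\ge e^{Q_n}$ for $Q_n\ge2$. It follows that
$$\frac{\log Q_{n+1}}{Q_n}\ge 1$$
for infinitely many $n$. The series diverges, so $x\notin\Bri$, that is, $x\in\NB$. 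Finally, $x\in[0,1]$ holds by definition of $\mathcal W$.
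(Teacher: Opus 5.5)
Your proof is correct and follows the paper's argument. Rationals lie in $\NB$ trivially; for irrational $x$ you reduce the approximating fractions, apply Legendre's theorem to obtain infinitely many convergents, and use the lower bound in \eqref{eq:cf-two-sided} to force $Q_{n+1}>Q_n(e^{Q_n}-1)$. Your observation that this gives $Q_{n+1}>e^{Q_n}$ once $Q_n\ge 2$, so infinitely many Brjuno terms are at least $1$, is a slightly cleaner finish than the paper's limit argument, which yields terms larger than $1/2$.
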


\begin{proof}
By definition the rational points belong to $\NB$.  Let therefore
$x\in\mathcal W\setminus\Q$. Then, for infinitely many pairs $(p,q)$, we have
\begin{equation}\label{eq:exp-approximation}
 \left|x-\frac pq\right|<\frac{e^{-q}}{q^2}.
\end{equation}
Reduce $p/q$ to $a/b$.  Since $b\leq q$,
$$
 \left|x-\frac ab\right|
 <\frac{e^{-q}}{q^2}
 \leq\frac{e^{-b}}{b^2}
 <\frac{1}{2b^2}.
$$
The  denominators $b$ are unbounded.  Otherwise only finitely many
 rationals could occur, and one of them would be represented along a
sequence $q\to\infty$; then \eqref{eq:exp-approximation} would force $x$
to equal that rational.  By Legendre's theorem, the resulting reduced
fractions are infinitely many convergents of $x$.

Let $P_n/Q_n$ be one of these convergents.  From
\eqref{eq:cf-two-sided} and the preceding estimate,
$$
 \frac{1}{Q_n(Q_n+Q_{n+1})}
 <\frac{e^{-Q_n}}{Q_n^2},
$$
so
$$
 Q_{n+1}>Q_n(e^{Q_n}-1).
$$
Consequently,
$$
 \frac{\log Q_{n+1}}{Q_n}
 >1+\frac{\log Q_n+\log(1-e^{-Q_n})}{Q_n}.
$$
The right-hand side tends to $1$.  Hence infinitely many terms of the
Brjuno series are larger than $1/2$, and the series diverges.  Thus
$x\in\NB$.
\end{proof}

\begin{theorem}\label{thm:H2-W}
For every nondegenerate interval $J\subset[0,1]$, we have
$$
 \cH^{h_2}(\mathcal W\cap J)=\infty.
$$
\end{theorem}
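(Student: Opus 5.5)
We will obtain the bound from the Mass Transference Principle (Theorem~\ref{thm:MTP}) with $h=h_2$, $\Omega=[0,1]$, and the balls $B_i=B(p/q,\,e^{-q}/q^2)$ indexed by pairs $(p,q)$ with $0\le p\le q$. These are ordered by increasing $q$, so the radii tend to zero. By \eqref{eq:W-def} we have $\mathcal W=[0,1]\cap\limsup B_i$. We already know that $h_2$ is a dimension function and that $h_2(r)/r$ is monotonic for small $r$. Also $h_2(r)/r\to\infty$, so $\cH^{h_2}(J)=\infty$ for every nondegenerate interval $J$; this is the standard fact that $\cH^h$ is infinite on sets of positive Lebesgue measure when $h(r)/r\to\infty$. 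Once the hypothesis of Theorem~\ref{thm:MTP} is verified, we therefore get $\cH^{h_2}(\mathcal W\cap J)=\cH^{h_2}(J)=\infty$.

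The central step is to compute the enlarged balls. For $r=e^{-q}/q^2<1/e$ we have $\log(1/r)=q+2\log q$, so
$$
B_i^{h_2}=B\Bigl(\frac pq,\,(q+2\log q)^{-2}\Bigr).
$$
We write this radius as $\psi(q)/q$ with $\psi(q)=q(q+2\log q)^{-2}$. This $\psi$ is eventually nonincreasing, since it behaves like $1/q$, and $\sum\psi(q)=\infty$ because $\psi(q)\ge c/q$. Khinchin's theorem (Theorem~\ref{thm:Khinchin}) then shows that $W(\psi)$ has full Lebesgue measure in $[0,1]$.

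It remains to check that $W(\psi)$ coincides, up to a null set, with $\limsup B_i^{h_2}$, which uses only $0\le p\le q$. Take $x\in(0,1)$ and an approximation $|x-p/q|<\psi(q)/q\le 1/q$ with $q$ large. Then $p$ lies in $[-1,q+1]$, and for $x$ in the interior the cases $p=-1$ and $p=q+1$ can occur only finitely often, since $|x-p/q|\to0$ along the sequence. Hence, apart from the endpoints, $W(\psi)\subset\limsup B_i^{h_2}$. Consequently $|J\cap\limsup B_i^{h_2}|=|J|$ for every interval $J\subset[0,1]$, which is the hypothesis of Theorem~\ref{thm:MTP}.

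We expect no serious obstacle. The only points needing care are this bookkeeping between the index sets of Khinchin's theorem and the MTP, the restriction $r<1/e$ (which holds for all $q\ge1$), and the precise form of the MTP hypothesis. If Theorem~\ref{thm:MTP} is read as requiring full measure on every subinterval of $\Omega$, this follows immediately from full measure on $[0,1]$.
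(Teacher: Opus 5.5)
Your proposal is correct and follows the paper's proof essentially step for step. You compute $h_2(e^{-q}/q^2)=(q+2\log q)^{-2}=\psi(q)/q$ with $\psi(q)\asymp 1/q$, apply Khinchin's theorem, discard numerators outside $0\le p\le q$, and conclude with the Mass Transference Principle together with $\cH^{h_2}(J)=\infty$.

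Three small remarks. First, the strict inequality $|x-p/q|<1/q$ with $x\in[0,1]$ already excludes $p=-1$ and $p=q+1$ outright, so your finitely-often argument is unnecessary. Second, eventual monotonicity of $\psi$ needs a short derivative check; $\psi(q)\asymp 1/q$ alone does not give it. Third, $r_1=1/e$ is not strictly less than $1/e$, which is harmless because $h_2(r_1)=1=(1+2\log 1)^{-2}$.
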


\begin{proof}
Set
$$
 r_q:=\frac{e^{-q}}{q^2}.
$$
The $h_2$-transforms of the balls in \eqref{eq:W-def} have radii
$$
 h_2(r_q)=\frac{1}{(q+2\log q)^2}
         =\frac{\psi(q)}q,
$$
where
$$\psi(q):=\frac{q}{(q+2\log q)^2}\asymp\frac1q.$$
Thus $\sum\limits_{q=1}^\infty\psi(q)=\infty$.  Moreover, $\psi$ is eventually decreasing,
since
$$
 \frac{d}{dx}\log\frac{x}{(x+2\log x)^2}
 =-\frac1x+o\!\left(\frac1x\right).
$$
Khinchin's theorem therefore implies that
$$
[0,1]\cap
\limsup_{q\to\infty}\,
\bigcup_{p\in\mathbb Z}
B\left(\frac pq,h_2(r_q)\right)
$$
has full Lebesgue measure in $[0,1]$. We now show that, for all
sufficiently large $q$, the numerators $p\in\mathbb Z$ occurring in this
limsup may be restricted to $0\leq p\leq q$.

Indeed, for all sufficiently large
$q$, we have
$h_2(r_q)<\frac1q.$
Let $x\in[0,1]$. If $p\notin(-1, q+1)$, then
$$\left|x-\frac pq\right|\geq\frac1q.$$
Consequently, for sufficiently large $q$, the inequality
$$
\left|x-\frac pq\right|<h_2(r_q)
$$
can hold for $x\in[0,1]$ only when
$0\leq p\leq q$.

Thus, after discarding finitely many values of $q$, which does not affect
the limsup set, we obtain
$$
[0,1]\cap
\limsup_{q\to\infty}\,
\bigcup_{p\in\mathbb Z}
B\left(\frac pq,h_2(r_q)\right)
=
[0,1]\cap
\limsup_{q\to\infty}\,
\bigcup_{p=0}^{q}
B\left(\frac pq,h_2(r_q)\right).
$$
Therefore the limsup of the transformed balls appearing in
\eqref{eq:W-def} has full Lebesgue measure in $[0,1]$.
The Mass Transference Principle therefore gives
$$
\cH^{h_2}(\mathcal W\cap J)=\cH^{h_2}(J)=+\infty,
$$
since $J$ is a nondegenerate interval and $h_2(r)/r\to\infty$ as
$r\to 0+$.
\end{proof}

\begin{corollary}\label{cor:H-lower}
For every nondegenerate interval $J\subset\R$ and every
$0<\delta\leq2$, we have
$$
 \cH^{h_\delta}(\NB\cap J)=\infty.
$$
\end{corollary}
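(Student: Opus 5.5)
The corollary follows from Theorem~\ref{thm:H2-W} by three reductions: we compare gauges, we transport the result by integer translation using \eqref{eq:periodicity}, and we use monotonicity of Hausdorff measure under inclusion.

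First, we fix a nondegenerate interval $J\subset\R$ and $0<\delta\leq2$. Shrinking $J$ only decreases the measure, so we may assume $J$ is bounded. Since $J$ is nondegenerate, it contains a nondegenerate subinterval $J'$ lying inside $[m,m+1]$ for some $m\in\mathbb Z$. Set $J_0:=J'-m\subset[0,1]$, which is again a nondegenerate interval.

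Second, we combine Lemma~\ref{lem:W-subset} with Theorem~\ref{thm:H2-W}. They give $\mathcal W\cap J_0\subset \NB\cap J_0$ and $\cH^{h_2}(\mathcal W\cap J_0)=\infty$, hence $\cH^{h_2}(\NB\cap J_0)=\infty$. By \eqref{eq:periodicity}, irrational $x$ lies in $\NB$ if and only if $x+m$ does. Rational points belong to $\NB$ and are preserved under integer translation, so $\NB\cap J'=(\NB\cap J_0)+m$. Hausdorff measure with any gauge is translation invariant, since covers by balls translate to covers by balls with the same radii. Therefore $\cH^{h_2}(\NB\cap J')=\infty$, and by monotonicity $\cH^{h_2}(\NB\cap J)=\infty$.

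Third, we pass from $h_2$ to $h_\delta$. For $0<r<1/e$ we have $\log(1/r)>1$, so $h_\delta(r)=(\log(1/r))^{-\delta}\geq(\log(1/r))^{-2}=h_2(r)$ when $\delta\leq 2$. For $r\geq 1/e$ both functions equal $1$. Hence $h_\delta\geq h_2$ pointwise, and so for every $\varepsilon<1/e$ and every cover we get $\sum h_\delta(r_j)\geq\sum h_2(r_j)$. Taking infima gives $\cH^{h_\delta}(E,\varepsilon)\geq \cH^{h_2}(E,\varepsilon)$, and letting $\varepsilon\to0$ gives $\cH^{h_\delta}(E)\geq\cH^{h_2}(E)$. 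Applying this to $E=\NB\cap J$ finishes the proof.

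There is no real obstacle in this argument. The only point needing care is that the translation step must also track the rational points, and this is immediate because $\Q+m=\Q$.
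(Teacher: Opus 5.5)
Your proof is correct and follows essentially the same route as the paper: the pointwise comparison $h_\delta\geq h_2$, Lemma~\ref{lem:W-subset} combined with Theorem~\ref{thm:H2-W} on a nondegenerate subinterval of $[0,1]$, and transfer to a general $J$ by an integer translation using \eqref{eq:periodicity} and translation invariance of Hausdorff measure. The differences are cosmetic: you translate before comparing gauges, and you translate $\NB\cap J_0$ rather than $\mathcal W$.
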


\begin{proof}
By \eqref{eq:h-delta}, one has $h_\delta\geq h_2$ whenever
$0<\delta\leq2$.  Thus Lemma~\ref{lem:W-subset} and Theorem~\ref{thm:H2-W} give the assertion on
every nondegenerate subinterval of $[0,1]$.

For a general interval $J\subset\R$, choose $m\in\mathbb Z$ and a
nondegenerate interval $J_0\subset[0,1]$ such that $m+J_0\subset J$.
By \eqref{eq:periodicity}, $m+\mathcal W\subset\NB$, and Hausdorff measure
is translation invariant.  Applying the result on $J_0$ proves the claim.
\end{proof}

\section{Critical capacity and the proof of the main theorem}

The following technical proposition is the arithmetic core of the endpoint
construction.  
\begin{proposition}\label{prop:prime-refinement}
There exists an absolute constant $C>0$ such that the following holds.
Let $I\subset(0,1)$ be a closed interval with $0<|I|<1/e$, let
$\eta>0$, and let $Q_0\geq2$. Then there exist a finite index set $A$, pairwise disjoint intervals
$$
J_a=\overline{B}\left(\frac{p_a}{q_a},
\frac{e^{-q_a}}{8q_a^2}\right)
\subset\operatorname{int}I,
\qquad a\in A,
$$
where each $q_a\geq Q_0$ is prime, together with weights $w_a>0$
satisfying
$$
\sum_{a\in A}w_a=1,
\qquad
\max_{a\in A}w_a\leq\frac12,
$$
such that
\begin{equation}\label{eq:refinement-diagonal}
\sum_{a\in A}w_a^2
\log^2{|J_a|}
\leq\eta,
\end{equation}
and
\begin{equation}\label{eq:refinement-off-diagonal}
\sum_{\substack{a,b\in A\\a\neq b}}w_aw_b
\sup_{x\in J_a,\,y\in J_b}
\log^2{|x-y|}
\leq
C\log^2{|I|}.
\end{equation}
\end{proposition}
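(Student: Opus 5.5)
The plan is to take for $\{p_a/q_a\}$ \emph{all} reduced fractions in the middle half $I'$ of $I$ with prime denominator $q$ in a long range of dyadic blocks $[2^k,2^{k+1})$, $K\le k\le K'$, where $2^K\ge Q_0$. Weights are constant within a block and decay like $1/k$ across blocks. Write
$$\mathcal F_k=\{p/q\in I' : q\in[2^k,2^{k+1}) \text{ prime},\ q\nmid p\}.$$
By the prime number theorem and equidistribution of $p/q$,
$$N_k:=\#\mathcal F_k\asymp |I|\,4^k/k$$
once $2^{-k}\ll|I|$. We give each point of $\mathcal F_k$ weight $w=v_k/N_k$, where
$$v_k=\frac{1}{kL},\qquad L=\sum_{k=K}^{K'}\frac1k\approx\log(K'/K).$$
Then $\sum w_a=1$, and $\max w_a\le 1/2$ holds trivially.

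\emph{Disjointness and containment.} Two distinct fractions with prime denominators $q\neq q'$ (or the same $q$ and distinct $p$) satisfy $|p/q-p'/q'|\ge 1/(qq')$. This is far larger than the sum of the radii $e^{-q}/(8q^2)$. Hence the $J_a$ are pairwise disjoint, and they lie in $\operatorname{int}I$ because the centres lie in $I'$. For the same reason, for $x\in J_a$ and $y\in J_b$ we have $|x-y|\ge \tfrac12|p_a/q_a-p_b/q_b|$. So the supremum in \eqref{eq:refinement-off-diagonal} may be replaced, up to an additive constant inside the logarithm, by the distance between the centres.

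\emph{Diagonal term.} Since $\log^2|J_a|\asymp q_a^2\asymp 4^k$ on block $k$, we get
$$\sum_a w_a^2\log^2|J_a|\asymp\sum_k N_k\frac{v_k^2}{N_k^2}4^k\asymp\sum_k\frac{v_k^2\,k}{|I|}=\frac{1}{|I|L^2}\sum_k\frac1k=\frac{1}{|I|L}.$$
Choosing $K'$ huge compared with $K$ makes $L$ large and the diagonal term $\le\eta$. A single block would give only $\asymp k/|I|$ here, which is why the spreading over blocks with $1/k$ weights is essential.

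\emph{Off-diagonal term (the main obstacle).} The sum in \eqref{eq:refinement-off-diagonal} is essentially $\iint\log^2|x-y|\,d\mu\,d\mu$ off the diagonal, for the atomic measure $\mu=\sum_k v_k\mu_k$. The key input is a Frostman-type bound uniform in scales: for every interval $B$ of length $t$,
$$\mu_k(B)\le C\,\frac{t+4^{-k}}{|I|}.$$
This follows from the elementary count: the number of fractions with denominator $\asymp 2^k$ in an interval of length $t$ is $\lesssim 4^k t+2^k$. Combined with the prime density $1/k$ and with $N_k$, this gives $\lesssim (4^k t+2^k)k/(|I|4^k)$, and the $2^k$ term is harmless since we may require $t\gtrsim 4^{-k}/k$ only down to $t\ge 1/(qq')$. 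We then decompose pairs according to dyadic distance $|x-y|\in[2^{-j-1},2^{-j})$. The $\mu\otimes\mu$ mass of such pairs is $\lesssim 2^{-j}/|I|$ for $2^{-j}\le|I|$, and the total mass is $1$ for $2^{-j}\sim|I|$. Since $\log^2|x-y|\asymp j^2$, we get
$$\sum_{j\gtrsim\log_2(1/|I|)}\frac{j^2 2^{-j}}{|I|}\lesssim\log^2|I|.$$
Pairs closer than $4^{-k}$ do not occur beyond the separation $|x-y|\ge1/(qq')\ge 4^{-\max(k,k')-1}$. The residual contribution from pairs at distances between $1/(qq')$ and $4^{-k}$ is bounded using the $2^k$ term of the count, giving at most $\lesssim k^2\cdot k\,2^{-k}/|I|$ per block. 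This is summable and small once $2^{K}$ is large relative to $1/|I|$. The delicate point, which I expect to require the most care, is that the bound must hold uniformly for cross-block pairs $\mu_k\otimes\mu_{k'}$ with a constant $C$ independent of $I$, $K$ and $K'$. I would handle this by applying the Frostman bound for the finer of the two measures and integrating against the coarser one. Finally, we fix $K$ large depending on $Q_0$ and $|I|$, then $K'$ large depending on $\eta$.
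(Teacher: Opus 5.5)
\textbf{Gap: the choice of $K'$ breaks disjointness.} Your construction is in substance the paper's: all prime-denominator fractions in the middle half of $I$, point weights $v_k/N_k\asymp 4^{-k}/(L|I|)$ (i.e.\ $\propto q^{-2}$ up to constants), and diagonal term $\asymp 1/(|I|L)$. The failure is in how you choose parameters. You fix $K$ and then send $K'\to\infty$. But the claim that $|p/q-p'/q'|\ge 1/(qq')$ is ``far larger than the sum of the radii'' holds only while $q'\lesssim q\,e^{q}$ for the smaller denominator $q$, i.e.\ roughly while $K'=O(2^K)$.

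Suppose $2^{K'}\gg 4^{K}e^{2^{K+1}}$. Take any $p_0/q_0$ in the interior of $I'$ with $q_0\in[2^K,2^{K+1})$ prime. For a prime $q'\in[2^{K'},2^{K'+1})$, some $p'/q'$ lies within $1/(2q')<e^{-q_0}/(8q_0^2)$ of $p_0/q_0$. It belongs to $\mathcal F_{K'}$ because you take \emph{all} such fractions. Hence the centre of $J_{p'/q'}$ lies in $J_{p_0/q_0}$, disjointness fails, $\sup\log^2|x-y|=+\infty$ for that pair, and the off-diagonal sum is infinite.

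The repair is what the paper does with $R=\lfloor e^{Q/8}\rfloor$. Impose $2^{K'+1}\le e^{2^K/8}$. Then all centres are $e^{-2^K/4}$-separated, while the radii are at most $e^{-2^K}4^{-K}/8$. With this cap, $L=\sum_{k=K}^{K'}1/k\approx\log(K'/K)\approx K\log 2-\log K$ still tends to infinity. So $L$ must be made large by taking $K$ large in terms of $\eta$ and $|I|$, not by enlarging $K'$ with $K$ fixed. (The paper gets this from Mertens' theorem for $\sum_{Q\le q\le e^{Q/8}}1/q$.)

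\textbf{Second problem: your Frostman bound is false as stated.} Your off-diagonal argument is a genuinely different route from the paper's, but its key input fails. The bound $\mu_k(B)\le C(t+4^{-k})/|I|$ cannot hold, since a single atom of $\mu_k$ has mass $1/N_k\asymp k4^{-k}/|I|$. For the same reason, the pair mass at distance $\sim 2^{-j}$ is not $\lesssim 2^{-j}/|I|$ at small scales.

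What your counts actually give is
$$\mu_k(B)\lesssim\min\{k(t+4^{-k}),\,t+2^{-k}\}/|I|.$$
The first term comes from the $4^{-k-1}$-separation of all fractions with denominators $\le 2^{k+1}$; the second comes from the prime density.

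With the corrected bound, your ``finer measure'' idea does close. For $k\le k'$ and an atom $x$ of $\mu_k$, the mass $\mu_{k'}(B(x,t)\setminus\{x\})$ vanishes for $t<4^{-k'-1}$ and is $\lesssim\min\{k't,\,t+2^{-k'}\}/|I|$ otherwise. Write $\log^2|x-y|=\log^2(|I|/2)+\int_{|x-y|}^{|I|/2}2t^{-1}\log(1/t)\,dt$ and integrate. This yields
$$\iint_{x\ne y}\log^2|x-y|\,d\mu_k\,d\mu_{k'}\lesssim\log^2|I|+k'^2 2^{-k'}/|I|,$$
which is harmless once $2^K\ge K^2/|I|$.

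\textbf{Comparison with the paper.} The paper avoids this multiscale bookkeeping entirely. For distinct primes $q\ne q'$, the map $(p,p')\mapsto pq'-p'q$ is injective into the nonzero integers. So the $N_qN_{q'}$ distances are distinct multiples of $1/(qq')$. The rearrangement bound $\sum_{j\le M}\log^2(D/j)\lesssim M\log^2(D/M)$ then gives $\lesssim qq'|I|^2\log^2|I|$ for each pair of denominators directly. That is where primality of the denominators does its real work; in your route it only supplies the density $1/k$.
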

We prove Proposition \ref{prop:prime-refinement} at the end of this section
and first use it to prove the following theorem.
\begin{theorem}\label{thm:critical-capacity-W}
For every nondegenerate interval $J\subset[0,1]$, we have
$$
 \LogCap_2(\mathcal W\cap J)>0.
$$
\end{theorem}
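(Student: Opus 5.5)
We will build a Cantor-type compact set $K\subset\mathcal W\cap J$ by iterating Proposition~\ref{prop:prime-refinement}, and a probability measure $\mu$ on $K$ with $I_2(\mu)<\infty$. Note that $k_2(x,y)=\log^2|x-y|$ when $|x-y|<1/e$ and $k_2=1$ otherwise. Throughout we have $k_2\le\log^2|x-y|+1$.

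\textbf{Construction.} Shrink $J$ to a closed interval $I_\emptyset\subset\operatorname{int}J\cap(0,1)$ with $|I_\emptyset|<1/e$, and give it mass $1$. Suppose a generation-$n$ interval $I_\omega$ has been built with mass $\mu_\omega$. Apply the proposition to $I=I_\omega$, with $Q_0=n+2$ and a parameter $\eta_n$ to be fixed later. This gives children $J_a$ with weights $w_a$. Set $I_{\omega a}=J_a$ and $\mu_{\omega a}=\mu_\omega w_a$. The children are disjoint closed subintervals of $\operatorname{int}I_\omega$, so the masses are consistent, and $\mu$ extends to a Borel probability measure on
$$K=\bigcap_n\bigcup_{|\omega|=n}I_\omega.$$
Because $\max w_a\le 1/2$, we get $\mu_\omega\le 2^{-n}$, so $\mu$ has no atoms. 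Every $x\in K$ lies in $\overline B(p/q,e^{-q}/(8q^2))\subset B(p/q,e^{-q}/q^2)$ at every generation, with $q\ge n+2\to\infty$. Hence $x\in\mathcal W$, and so $K\subset\mathcal W\cap J$. Some care is needed since $\mathcal W$ requires $0\le p\le q$, but this holds automatically because the ball lies inside $(0,1)$ and its radius is small.

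\textbf{Energy estimate.} Split the double integral according to the generation at which $x$ and $y$ separate. Since $\mu$ has no atoms, the diagonal has measure zero. Suppose $x,y$ lie in a common $I_\omega$ with $|\omega|=n$, but in distinct children $I_{\omega a}$ and $I_{\omega b}$. By \eqref{eq:refinement-off-diagonal}, the contribution of all such pairs is at most
$$\sum_{|\omega|=n}\mu_\omega^2\bigl(C\log^2|I_\omega|+1\bigr).$$
Apply \eqref{eq:refinement-diagonal} at the parent of $\omega$. Since $\mu_\omega=\mu_{\omega'}w_a$ and $\mu_{\omega'}\le1$, this gives
$$\sum_{|\omega|=n}\mu_\omega^2\log^2|I_\omega|\le \eta_{n-1}\sum_{|\omega'|=n-1}\mu_{\omega'}^2\le\eta_{n-1}.$$
The $+1$ terms contribute at most $\sum_n\sum_{|\omega|=n}\mu_\omega^2\le\sum_n 2^{-n}<\infty$. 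Generation $0$ contributes the finite quantity $C\log^2|I_\emptyset|+1$. Choosing $\eta_n=2^{-n}$, we obtain
$$I_2(\mu)\le C\log^2|I_\emptyset|+1+C\sum_n 2^{-n}+2<\infty.$$
Therefore $\LogCap_2(K)\ge 1/I_2(\mu)>0$, and hence $\LogCap_2(\mathcal W\cap J)>0$.

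\textbf{Main obstacle.} The key point is that the off-diagonal bound at generation $n$ is expressed in terms of $\log^2|I_\omega|$. This quantity blows up, but the factor $\mu_\omega^2$ is controlled by the freedom to choose $\eta$ small at the previous generation. Matching the two inequalities of the proposition across consecutive generations is the crux of the argument. Everything else is bookkeeping: measurability, compactness of $K$, and checking that the proposition's hypotheses $|I|<1/e$ and $I\subset(0,1)$ are inherited by the children, which holds since $|J_a|\le e^{-q_a}<1/e$.
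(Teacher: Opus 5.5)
Your proposal is correct and follows essentially the same route as the paper. Like the paper, you iterate Proposition~\ref{prop:prime-refinement} with $\eta=2^{-n}$, decompose $K\times K$ off the diagonal by the generation at which points separate, bound each generation's contribution by $C\sum_{|\omega|=n}\mu_\omega^2\log^2|I_\omega|$ via \eqref{eq:refinement-off-diagonal}, and make these sums summable using \eqref{eq:refinement-diagonal} at the parent generation; the only differences are cosmetic (taking $Q_0=n+2$ rather than exceeding all earlier denominators, and carrying a harmless $+1$ from the truncation of $k_2$, which the paper avoids because all distances in $K$ are already below $1/e$).
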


\begin{proof}
Choose a nondegenerate closed interval
$I_\varnothing\subset J\cap(0,1)$ with
$|I_\varnothing|<1/e$.  We construct a rooted tree of nested
closed intervals.  Suppose the intervals $I_v$ at level $n-1$ have been
chosen.  In each $I_v$, apply Proposition~\ref{prop:prime-refinement} with
$\eta=2^{-n}$ and with $Q_0$ larger than every denominator used at the
preceding levels.  The finitely many intervals obtained in this way are used to define the
next level of the construction.
Thus the denominators tend to infinity along every branch.  Let $\mathcal I_n$ denote the finite collection of intervals at level $n$,
with $\mathcal I_0:=\{I_\varnothing\}$.

Assign mass $m_\varnothing=1$ to the root.  For each interval $I_u$ obtained from $I_v$ at the next level, define its
mass by
\begin{equation}\label{eq:tree-mass}
m_u:=m_vw_u,
\end{equation}
where $w_u$ is the weight assigned to $I_u$ by
Proposition~\ref{prop:prime-refinement}.
The consistent cylinder masses define a Borel probability measure $\mu$
on
$$
 K:=\bigcap_{n=0}^{\infty}\bigcup_{|v|=n}I_v.
$$
The set $K$ is compact and nonempty.  Since every conditional weight is at
most $1/2$, the mass of a level-$n$ cylinder is at most $2^{-n}$, so
$\mu$ has no atoms.  Moreover, every $x\in K$ lies, at each level, in an
interval centered at a rational $p_n/q_n$ with $q_n\to\infty$ and
$$
 \left|x-\frac{p_n}{q_n}\right|
 \leq\frac{e^{-q_n}}{8q_n^2}
 <\frac{e^{-q_n}}{q_n^2}.
$$
Hence
\begin{equation}\label{eq:K-in-W}
 K\subset\mathcal W\cap J.
\end{equation}

For $n\geq0$, set
$$S_n:=
\sum_{I_v\in\mathcal I_n}
m_v^2
\log^2{|I_v|}.$$
By \eqref{eq:refinement-diagonal} and \eqref{eq:tree-mass}, for $n\geq1$ we have
\begin{align*}
S_n
&=
\sum_{I_v\in\mathcal I_{n-1}}
m_v^2
\sum_{\substack{I_u\in\mathcal I_n\\ I_u\subset I_v}}
w_u^2
\log^2{|I_u|} \\
&\leq
2^{-n}
\sum_{I_v\in\mathcal I_{n-1}}m_v^2
\leq 2^{-n}.
\end{align*}
Also
$ S_0=\log^2{|I_\varnothing|}<\infty$.

All intervals appearing in the construction have length less than $1/e$.
Moreover, along every nested sequence
$$
I_{v_0}\supset I_{v_1}\supset I_{v_2}\supset\cdots
$$
arising from the construction, the diameters tend to zero. Hence, for any
two distinct points $x,y\in K$, there is a unique interval $I_v$ of the
construction which contains both $x$ and $y$, but such that $x$ and $y$
belong to two distinct intervals at the next level inside $I_v$. We call
$I_v$ their last common interval.

Since $\mu$ is nonatomic, the diagonal
$$
\Delta:=\{(x,x):x\in K\}
$$
has zero $\mu\times\mu$-measure. Therefore, up to a set of
$\mu\times\mu$-measure zero, the set $K\times K$ can be decomposed
according to the last common interval of each pair $(x,y)$.

Fix an interval $I_v$, and let $(I_u)$ be the finitely many intervals at the
next level obtained from $I_v$. Consider the set
$$
E_v:=\{(x,y)\in K\times K:\ I_v
\text{ is the last common interval containing }x\text{ and }y\}.
$$
If $(x,y)\in E_v$, then both $x$ and $y$ belong to $I_v$, but they cannot
belong to the same interval at the next level. Indeed, if there were an
interval $I_u$ at the next level containing both $x$ and $y$, then $I_v$
would not be their last common interval. Hence there exist two distinct
intervals $I_u$ and $I_{u'}$ at the next level such that
$$
x\in K\cap I_u,
\qquad
y\in K\cap I_{u'},
\qquad
u\neq u'.
$$
Therefore,
$$
E_v\subset
\bigcup_{\substack{u,u'\\u\neq u'}}
(K\cap I_u)\times(K\cap I_{u'}).
$$
Since the kernel
$\log^2{|x-y|}$ is nonnegative, we obtain
\begin{align*}
\iint_{E_v}
\log^2{|x-y|}
\,d\mu(x)\,d\mu(y) \leq
\sum_{\substack{u,u'\\u\neq u'}}
\int_{K\cap I_u}\int_{K\cap I_{u'}}
\log^2{|x-y|}
\,d\mu(x)\,d\mu(y).
\end{align*}
Thus, in order to estimate the contribution to the energy from all pairs
whose last common interval is $I_v$, it is enough to sum the interactions
between distinct intervals arising from $I_v$ at the next level.

Since
$$
\mu(K\cap I_u)=m_u=m_vw_u,
$$
 we have
$$
\sum_{\substack{u,u'\\u\neq u'}}
\int_{K\cap I_u}\int_{K\cap I_{u'}}
\log^2{|x-y|}
\,d\mu(x)\,d\mu(y)\le m_v^2
\sum_{\substack{u,u'\\u\neq u'}}
w_uw_{u'}
\sup_{\substack{x\in I_u\\y\in I_{u'}}}
\log^2{|x-y|}.
$$
Applying \eqref{eq:refinement-off-diagonal} inside $I_v$, we obtain
$$
\sum_{\substack{u,u'\\u\neq u'}}
\int_{K\cap I_u}\int_{K\cap I_{u'}}
\log^2{|x-y|}
\,d\mu(x)\,d\mu(y)
\leq
C m_v^2
\log^2|I_v|.
$$
Summing over the tree gives
\begin{align*}
  I_2(\mu)&=\int_{K}\int_{K}
\log^2{|x-y|}
\,d\mu(x)\,d\mu(y)
 \leq 
C\sum_{n=0}^{\infty}
\sum_{|v|=n}
m_v^2
\log^2|I_v|\\
&= C\sum_{n=0}^{\infty}S_n
 \leq C\left(\log^2{|I_\varnothing|}+
             \sum_{n=1}^{\infty}2^{-n}\right)
 <\infty.   
\end{align*}

Thus $\mu$ has finite order-two logarithmic energy. Therefore $K$ has positive order-two capacity, and
\eqref{eq:K-in-W} proves the theorem.
\end{proof}

\begin{corollary}\label{cor:critical-capacity-NB}
For every nondegenerate interval $J\subset\R$,
$$
 \LogCap_2(\NB\cap J)>0.
$$
\end{corollary}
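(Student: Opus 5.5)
The plan is to reduce Corollary~\ref{cor:critical-capacity-NB} to Theorem~\ref{thm:critical-capacity-W} by translation, in the same way Corollary~\ref{cor:H-lower} was deduced from Theorem~\ref{thm:H2-W}.

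Given a nondegenerate interval $J\subset\R$, choose $m\in\mathbb Z$ and a nondegenerate interval $J_0\subset[0,1]$ with $m+J_0\subset J$. This is possible because $J$ contains a nondegenerate subinterval lying inside some $[m,m+1]$. By Theorem~\ref{thm:critical-capacity-W}, $\LogCap_2(\mathcal W\cap J_0)>0$. Hence there is a compact $K\subset\mathcal W\cap J_0$ and a measure $\mu\in\cP(K)$ with $I_2(\mu)<\infty$.

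Translate by $m$. The set $K+m$ is compact. Lemma~\ref{lem:W-subset} gives $\mathcal W\subset\NB$, and \eqref{eq:periodicity} shows that $\NB$ is invariant under integer translations, so
$$K+m\subset (\mathcal W+m)\cap(J_0+m)\subset\NB\cap J.$$
Let $\mu_m$ be the push-forward of $\mu$ under $x\mapsto x+m$. The kernel $k_2(z,w)=(h_1(|z-w|))^{-2}$ depends only on $|z-w|$, so $I_2(\mu_m)=I_2(\mu)<\infty$. Therefore
$$\LogCap_2(K+m)\geq I_2(\mu_m)^{-1}>0,$$
and by the definition of $\LogCap_2$ on arbitrary sets, $\LogCap_2(\NB\cap J)\geq\LogCap_2(K+m)>0$.

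There is no real obstacle here. The only point to check is that capacity is translation invariant, and this follows immediately because the kernel depends only on the distance $|z-w|$.
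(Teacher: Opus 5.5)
Your proposal is correct and follows the paper's own proof essentially step for step. Like the paper, you translate a compact subset of $\mathcal W\cap J_0$ that carries a finite-energy measure by an integer $m$, and you conclude using Lemma~\ref{lem:W-subset}, the periodicity \eqref{eq:periodicity}, and the translation invariance of the kernel $k_2$.
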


\begin{proof}
Choose $m\in\mathbb Z$ and a nondegenerate interval
$J_0\subset[0,1]$ such that $m+J_0\subset J$.  By
Theorem~\ref{thm:critical-capacity-W}, there is a compact set
$K\subset\mathcal W\cap J_0$ carrying a finite order-two energy measure.
Translation preserves that energy, while
$m+\mathcal W\subset\NB$ by \eqref{eq:periodicity} and
Lemma~\ref{lem:W-subset}.  Hence $m+K\subset\NB\cap J$ has positive capacity.
\end{proof}
We are now ready to complete the proof of the main theorem.
\begin{proof}[Proof of Theorem~\ref{thm:main}]
Part~\textup{(i)} is Corollary~\ref{cor:H-lower}. Part~\textup{(ii)} follows from
Theorem~\ref{thm:SR}.

By Corollary~\ref{cor:critical-capacity-NB}, we have
$\LogCap_2(\NB\cap J)>0$ and hence $\LogCap_2(\NB)>0$. Since $\LogCap_s$ is nonincreasing in $s$, for $0<s<2$ we have
$$
 \LogCap_s(\NB\cap J)>0\qquad(0<s\leq2).
$$
For $s>2$, Theorem~\ref{thm:SR} and monotonicity with respect to the underlying
set give
$$
 \LogCap_s(\NB\cap J)\leq\LogCap_s(\NB)=0.
$$
This shows that the critical exponent for logarithmic capacity is $2$.  The identity
$\dim_{\mathrm{cap}}(\NB\cap J)=2$ follows from
\eqref{eq:capacity-dimension-intro}; the global identity follows in the same
way.  
\end{proof}

\subsection{Proof of Proposition \ref{prop:prime-refinement}}

For a closed interval $I\subset(0,1)$, let $I^*$ denote the concentric
interval of length $|I|/2$.  If $q$ is prime, define
$$\mathcal R_q(I)
 :=\left\{\frac pq\in I^*:1\leq p\leq q-1\right\},
 \qquad
 N_q(I):=\#\mathcal R_q(I).$$
Let us first prove the following lemma.
\begin{lemma}\label{lem:grid-estimate}
There is an absolute constant $C\geq1$ such that the following holds.  Let
$I\subset(0,1)$ be a closed interval of length
$0<\ell<1/e$, and let $q,q'\geq8/\ell$ be primes.  Then
\begin{equation}\label{eq:grid-count}
 C^{-1}\ell q\leq N_q(I)\leq C\ell q.
\end{equation}
If $q\neq q'$, then
\begin{equation}\label{eq:grid-cross}
 \sum_{x\in\mathcal R_q(I)}
 \sum_{y\in\mathcal R_{q'}(I)}
 \log^2{|x-y|}
 \leq Cqq'\ell^2\log^2{\ell},
\end{equation}
and
\begin{equation}\label{eq:grid-same}
 \sum_{\substack{x,y\in\mathcal R_q(I)\\x\neq y}}
 \log^2{|x-y|}
 \leq Cq^2\ell^2\log^2{\ell}.
\end{equation}
\end{lemma}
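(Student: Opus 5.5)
The counting bound \eqref{eq:grid-count} is elementary. The points $p/q$ with $1\le p\le q-1$ form an arithmetic progression of step $1/q$ in $(0,1)$. Since $I\subset(0,1)$, the interval $I^*$ has length $\ell/2$ and lies at distance at least $\ell/4$ from $0$ and $1$. Because $q\geq 8/\ell$, we have $1/q\le \ell/8$, so the restriction $1\le p\le q-1$ is automatic for $p/q\in I^*$. The number of grid points in an interval of length $\ell/2$ is then between $\ell q/2-1$ and $\ell q/2+1$. Since $\ell q/2\ge 4$, both bounds are comparable to $\ell q$, which gives \eqref{eq:grid-count} with, say, $C=2$.

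For the cross sum \eqref{eq:grid-cross}, I would split pairs $(x,y)\in\mathcal R_q(I)\times\mathcal R_{q'}(I)$ dyadically by distance. First, $x\neq y$ always holds: if $p/q=p'/q'$ with $q\neq q'$ prime and $1\le p\le q-1$, then $q\mid p$, which is impossible. In fact $|x-y|=|pq'-p'q|/(qq')\geq 1/(qq')$. For $t\in\mathbb Z$ and fixed $x$, the map $p'\mapsto pq'-p'q$ takes values in a residue class modulo $q$. So the number of $y\in\mathcal R_{q'}(I)$ with $|x-y|\le 2^{-k}$ is at most $2q'2^{-k}+1$. Moreover, all distances are at most $\ell/2$, so $\log^2|x-y|\ge\log^2(\ell/2)$, which is comparable to $\log^2\ell$ because $\ell<1/e$.

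The main estimate is as follows. Write $\log^2|x-y|\le 2\log^2\ell+2\log^2(\ell/|x-y|)$; the first term summed over all pairs gives $O(qq'\ell^2\log^2\ell)$ by \eqref{eq:grid-count}. For the second term, group the $y$ in shells $\ell 2^{-j-1}<|x-y|\le \ell2^{-j}$. A shell contains at most $q'\ell2^{1-j}+1$ points and contributes weight at most $(j+1)^2\log^2 2$. The part $\sum_j q'\ell 2^{-j}j^2$ is $O(q'\ell)$, which summed over the $O(q\ell)$ values of $x$ gives $O(qq'\ell^2)$. The "$+1$" terms are the delicate ones, since these shells contain at most one point by the spacing $1/q'$. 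Here I would use instead the gap bound $|x-y|\ge1/(qq')$ together with the finer count: for $\delta<1/q'$, at most one $y$ lies within $\delta$ of $x$. Summing $\log^2(\ell qq')$ over one nearest neighbour per $x$ gives $O(q\ell\log^2(qq'))$. This is \emph{not} automatically $\le Cqq'\ell^2\log^2\ell$ unless $\log^2(qq')\lesssim q'\ell\log^2\ell$. That inequality is true when $q,q'$ are comparable, but otherwise the nearest-neighbour terms are the main obstacle.

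To overcome this, I would exploit equidistribution more carefully. As $x$ ranges over $\mathcal R_q(I)$, the residues $pq'\bmod q$ are distinct, so the nearest distances $|pq'-p'q|/(qq')$ take each value $t/(qq')$ at most twice. The sum of $\log^2$ of nearest-neighbour distances over all $x$ is then bounded by $\sum_{t\le N}\log^2(t/(qq'))$ with $N\asymp q\ell$. Rewriting this as $N\log^2(qq'/N)+O(N)$ gives the order $q\ell\log^2(q'/\ell)$ in the worst case. I expect this still needs $q'\ell\gtrsim\log^2(q'/\ell)/\log^2\ell$, which holds since $q'\ell\ge8$ and $\log^2(q'/\ell)\le 2\log^2(q'\ell)+2\log^2\ell^2$, while $x\mapsto\log^2 x/x$ is bounded on $[8,\infty)$. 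Finally, \eqref{eq:grid-same} is the same argument, but simpler: the points of $\mathcal R_q(I)$ are exactly $1/q$-separated, so the $j$-th neighbour is at distance $j/q$. The sum is then $\sum_x\sum_{j\le N}\log^2(j/q)$, which equals $O(N^2\log^2(N/q))$ plus lower-order terms, that is, $O(q^2\ell^2\log^2\ell)$.
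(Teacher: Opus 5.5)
Your proof is correct. For \eqref{eq:grid-count} and \eqref{eq:grid-same} it is essentially the paper's argument: grid spacing $1/q$, each difference $j/q$ occurring $O(N_q(I))$ times, then the elementary bound $\sum_{j\le M}\log^2(D/j)\le CM\log^2(D/M)$.

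For the cross sum \eqref{eq:grid-cross} you take a longer route than the paper. The paper observes that $(p,p')\mapsto k=pq'-p'q$ is injective on \emph{all} of $\mathcal R_q(I)\times\mathcal R_{q'}(I)$. This follows from $q\mid p_1-p_2$, $q'\mid p_1'-p_2'$ and the ranges $1\le p\le q-1$. Hence the $M=N_q(I)N_{q'}(I)\asymp\ell^2qq'$ distances are distinct nonzero multiples of $1/(qq')$. One rearrangement then gives $CM\log^2(qq'/M)\asymp qq'\ell^2\log^2\ell$ directly, with no splitting and no auxiliary inequality.

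You instead control most pairs by the $1/q'$-spacing alone, via dyadic shells around each $x$. You invoke arithmetic (distinctness of $pq'\bmod q$ as $x$ varies) only for the nearest neighbour below the spacing. This forces the extra check $\log^2(q'/\ell)\lesssim q'\ell\log^2\ell$, which you verify correctly using $q'\ell\ge 8$. Your decomposition has the mild merit of showing that number theory is needed only at scales below $1/q'$. However, the fact you use there is exactly the paper's injectivity restricted to one $y$ per $x$. Applying it to all pairs at once makes the shell bookkeeping unnecessary.

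A few small points to tidy:
\begin{itemize}
\item Your count gives $N_q(I)\ge \ell q/2-1\ge 3\ell q/8$, so $C=2$ is not quite enough; $C=3$ works.
\item A $1/q'$-spaced grid can have two points within $\delta<1/q'$ of $x$. The ``at most one'' claim needs $\delta<1/(2q')$, or you can allow two neighbours at the cost of a constant factor.
\item In shells with $q'\ell2^{1-j}\ge1$, the $+1$ should be explicitly absorbed into $q'\ell2^{1-j}$; you leave this implicit.
\end{itemize}
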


\begin{proof}
The number of points of $q^{-1}\mathbb Z$ in $I^*$ differs from
$\ell q/2$ by at most two.  Since $q\geq8/\ell$, one has, more
precisely,
$$
 \frac{\ell q}{4}\leq N_q(I)\leq\frac{3\ell q}{4},
$$
which implies \eqref{eq:grid-count}.

We shall use the elementary estimate
\begin{equation}\label{eq:log-sum}
 \sum_{j=1}^{M}\log^2\frac{D}{j}
 \leq C M\log^2\frac{D}{M}
 \qquad \text{for } 1\leq M\leq\frac{D}{e}.
\end{equation}
Indeed, write
$\log(D/j)=\log(D/M)+\log(M/j)$.  Since
$\log(D/M)\geq1$, comparison of the remaining sum with
$M\int_0^1\log^2t\,dt$ gives \eqref{eq:log-sum}.

Assume first that $q\neq q'$.  For
$x=p/q\in\mathcal R_q(I)$ and
$y=p'/q'\in\mathcal R_{q'}(I)$, set
$$
 k=pq'-p'q.
$$
The map $(p,p')\mapsto k$ is injective.  Indeed, equality for two pairs
implies that $q\mid(p_1-p_2)$ and $q'\mid(p'_1-p'_2)$, and the allowed
ranges force both differences to vanish.  Moreover, $k\neq0$ and
$$
 |x-y|=\frac{|k|}{qq'}.
$$
There are
$M=N_q(I)N_{q'}(I)\asymp\ell^2qq'$ distinct nonzero signed integers of
this form.  By the upper bound in \eqref{eq:grid-count} and
$\ell<1/e$, one has $M\leq qq'/e$.  Since the summand decreases with
$|k|$, rearrangement (with an inessential factor two for the two signs)
and \eqref{eq:log-sum}, with $D=qq'$, give
$$
 \sum_{x\in\mathcal R_q(I)}
 \sum_{y\in\mathcal R_{q'}(I)}
\log^2{|x-y|}
 \leq C M\log^2\frac{qq'}M
 \leq Cqq'\ell^2\log^2{\ell}.
$$
This proves \eqref{eq:grid-cross}.

For a fixed denominator $q$, each difference $k=p-p'$ occurs at most
$N_q(I)$ times.  Moreover, \eqref{eq:grid-count} and $\ell<1/e$
give $N_q(I)\leq q/e$.  Hence
$$
 \sum_{\substack{x,y\in\mathcal R_q(I)\\x\neq y}}
 \log^2{|x-y|}
 \leq2N_q(I)\sum_{k=1}^{N_q(I)}
 \log^2\frac{q}{k}.
$$
Now \eqref{eq:grid-count} and \eqref{eq:log-sum} imply
\eqref{eq:grid-same}.
\end{proof}

\begin{proof}[Proof of Proposition~\ref{prop:prime-refinement}]
Write $\ell=|I|$.  Choose a large integer
$$
 Q\geq\max\{Q_0,8/\ell\}
$$
and put $R=\lfloor e^{Q/8}\rfloor$.  Let $\mathcal P(Q,R)$ be the set
of primes in $[Q,R]$.  For every
$q\in\mathcal P(Q,R)$ and every
$\xi=p/q\in\mathcal R_q(I)$, let
$$
 J_\xi:=\overline B\left(\xi,\frac{e^{-q}}{8q^2}\right).
$$
For sufficiently large $Q$, these intervals lie in $\operatorname{int}I$
and are pairwise disjoint.  Indeed, their centers lie in $I^*$, while two
distinct  fractions with denominators at most $R$ are separated by
at least $R^{-2}\geq e^{-Q/4}$; the sum of the two corresponding radii is at most
$e^{-Q}/(4Q^2)$.

Set
\begin{equation}\label{eq:normalization}
 Z:=\sum_{q\in\mathcal P(Q,R)}\frac{N_q(I)}{q^2},
 \qquad
 w_\xi:=\frac{q^{-2}}Z
 \quad(\xi\in\mathcal R_q(I)).
\end{equation}
By \eqref{eq:grid-count},
\begin{equation}\label{eq:Z-comparison}
 Z\asymp\ell
 \sum_{q\in\mathcal P(Q,R)}\frac1q.
\end{equation}
Mertens' theorem for primes gives
$$
 \sum_{\substack{q\leq x\\q\text{ prime}}}\frac1q
 =\log\log x+B+o(1);
$$
see \cite[Chapter~22]{HW08}. Therefore, since
$R=\lfloor e^{Q/8}\rfloor$, we have
$$
 \sum_{q\in\mathcal P(Q,R)}\frac1q
 =\log\log R-\log\log Q+o(1)
 =\log Q-\log\log Q-\log 8+o(1)
 \longrightarrow\infty.
$$
Since $\ell$ is fixed, \eqref{eq:Z-comparison} implies that
$Z\to\infty$. Moreover, we have
$$
 \sum_{\xi}w_\xi=1,
 \qquad
 \max_{\xi}w_\xi\leq\frac{1}{Q^2Z}\longrightarrow0.
$$
Thus, after increasing $Q$ if necessary, we have
$\max_\xi w_\xi\leq1/2$. Since $|J_\xi|=e^{-q}/(4q^2)$, one has
$$
 \log\frac{1}{|J_\xi|}=q+2\log q+\log4\leq Cq.
$$
Therefore
$$
 \sum_\xi w_\xi^2\log^2{|J_\xi|}
 \leq\frac{C}{Z^2}
 \sum_{q\in\mathcal P(Q,R)}\frac{N_q(I)}{q^2}
 =\frac{C}{Z}.
$$
Taking $Q$ larger if necessary makes this quantity at most $\eta$,
which proves \eqref{eq:refinement-diagonal}.

For distinct centers $\xi,\zeta$, the preceding separation estimates give
$$ |x-y|\geq\frac12|\xi-\zeta|
 \qquad\text{for }x\in J_\xi,\ y\in J_\zeta,$$
and hence
$$
 \sup_{x\in J_\xi,\,y\in J_\zeta}
\log^2{|x-y|}
 \leq C\log^2{|\xi-\zeta|}.$$
Let
$$
 S:=\sum_{q\in\mathcal P(Q,R)}\frac1q,
 \qquad
 \mathcal A:=\{(q,\xi):q\in\mathcal P(Q,R),\ \xi\in\mathcal R_q(I)\}.
$$
Applying Lemma~\ref{lem:grid-estimate} to equal and unequal denominators gives
\begin{align*}
 &\sum_{\substack{(q,\xi),(q',\zeta)\in\mathcal A\\
                     (q,\xi)\neq(q',\zeta)}}
 q^{-2}(q')^{-2}
\log^2{|\xi-\zeta|}\\
 &\qquad\leq C\ell^2\log^2{\ell}
 \left(S^2+\sum_{q\in\mathcal P(Q,R)}\frac1{q^2}\right)
 \leq 2CS^2\ell^2\log^2{\ell}.
\end{align*}
This is the off-diagonal sum before division by the normalizing factor
$Z^2$.  Since $Z^2\asymp\ell^2S^2$, division proves
\eqref{eq:refinement-off-diagonal} and completes the proof.
\end{proof}

\begin{remark}
The same method applies to the P\'erez--Marco set. Recall that an irrational
number $x$ belongs to the P\'erez--Marco set $\mathcal{PM}$ if
$$
\sum_{n=0}^{\infty}
\frac{\log\log Q_{n+1}}{Q_n}<\infty,
$$
where $P_n/Q_n$ are the convergents of $x$.

Indeed, replacing the approximating radii
$\frac{e^{-q}}{q^2}$ by $\frac{e^{-e^q}}{q^2}$, one obtains a limsup set contained in
$\mathbb R\setminus\mathcal{PM}$. The Mass Transference Principle can then
be applied to the gauges
$$
\widetilde h_\delta(r)
=
\left(\log\log\frac1r\right)^{-\delta},
$$
for sufficiently small $r$, and gives the same critical exponent $2$.
Likewise, the prime-denominator construction used above can be adapted to
the kernel
$$
\widetilde k_s(x,y)
=
\left(\log\log\frac1{|x-y|}\right)^s.
$$
The corresponding upper capacity estimate for the complement of the
P\'erez--Marco set was proved in \cite{R16}. Hence the same critical
exponent $2$ is obtained in this setting.
\end{remark}

\begin{remark}
A higher-dimensional analogue of the upper estimates was recently obtained
by Akramov and Rakhimov in \cite{AR26}. Let $\mathcal B_n$
denote the Brjuno set in $\mathbb C^n$, $n\geq2$. They proved that
$\mathbb C^n\setminus\mathcal B_n$ has zero $C_\sigma$-capacity, with
respect to the kernel
$$
k_\sigma(z,\xi)
=
\frac{\left|\log\|z-\xi\|\right|^\sigma}
     {\|z-\xi\|^{2n-2}},
$$
for every $\sigma>n$. As a consequence,
$$
\mathcal H^{h_\delta}
\bigl(\mathbb C^n\setminus\mathcal B_n\bigr)=0,
\qquad
h_\delta(t)
=
t^{2n-2}\left|\log t\right|^{-\delta},
$$
for every $\delta>n+1$.

It would be interesting to determine whether these estimates are sharp,
in particular at the borderline exponents $\sigma=n$ and $\delta=n+1$.
The one-dimensional arguments used in the present paper rely essentially
on continued fractions and therefore do not directly extend to higher
dimensions.
\end{remark}


\begin{thebibliography}{99}
\bibitem{AA26}
N.~Akramov and A.~Ashirov,
\emph{On the capacity dimensions of the Brjuno and Perez--Marco sets},
J. Anal. \textbf{34} (2026), no.~2, 1071--1082.
\href{https://doi.org/10.1007/s41478-025-00988-5}
{doi:10.1007/s41478-025-00988-5}.
\bibitem{AR26}
N.~Akramov and K.~Rakhimov,
\emph{Capacity dimension of the Brjuno set in $\mathbb C^n$},
Complex Anal. Oper. Theory \textbf{20} (2026), Article~120.
\href{https://doi.org/10.1007/s11785-026-01970-0}
{doi:10.1007/s11785-026-01970-0}.
\bibitem{BV06}
V.~Beresnevich and S.~Velani,
\emph{A mass transference principle and the Duffin--Schaeffer conjecture
for Hausdorff measures},
Ann. of Math. (2) \textbf{164} (2006), no.~3, 971--992.

\bibitem{B71}
A.~D. Brjuno,
\emph{Analytical form of differential equations. I, II},
Trans. Moscow Math. Soc. \textbf{25} (1971), 131--288;
\textbf{26} (1972), 199--239.


\bibitem{C57}
J.~W.~S. Cassels,
\emph{An Introduction to Diophantine Approximation},
Cambridge Tracts in Mathematics and Mathematical Physics, no.~45,
Cambridge University Press, Cambridge, 1957.

\bibitem{HW08}
G.~H. Hardy and E.~M. Wright,
\emph{An Introduction to the Theory of Numbers},
6th ed., Oxford University Press, Oxford, 2008.

\bibitem{Kh64}
A.~Ya. Khinchin,
\emph{Continued Fractions},
University of Chicago Press, Chicago, IL, 1964.


\bibitem{L72}
N.~S. Landkof,
\emph{Foundations of Modern Potential Theory},
Grundlehren der mathematischen Wissenschaften, vol.~180,
Springer-Verlag, Berlin--New York, 1972.
\bibitem{R16}
K.~Rakhimov,
\emph{Capacity dimension of the Perez-Marco set},
in \emph{Topics in Several Complex Variables},
Contemp. Math. \textbf{662},
Amer. Math. Soc., Providence, RI, 2016, 131--138.

\bibitem{AS1}
A.~Sadullaev and K.~Rakhimov,
\emph{Capacity dimension of the Brjuno set},
Indiana Univ. Math. J. \textbf{64} (2015), no.~6, 1829--1834.
\href{https://doi.org/10.1512/iumj.2015.64.5690}
{doi:10.1512/iumj.2015.64.5690}.

\bibitem{Y95}
J.-C. Yoccoz,
\emph{Th\'eor\`eme de Siegel, nombres de Bruno et polyn\^omes quadratiques},
in \emph{Petits diviseurs en dimension $1$},
Ast\'erisque \textbf{231} (1995), 3--88.

\end{thebibliography}
\end{document}